\documentclass{amsart}

\usepackage{graphicx}         
\usepackage[tight]{subfigure} 
\usepackage{url}              

%
%

%
\newtheorem{theorem}{Theorem}[section]
\newtheorem{corollary}[theorem]{Corollary}
\newtheorem{lemma}[theorem]{Lemma}
\newtheorem{algorithm}[theorem]{Algorithm}
\newtheorem{observation}[theorem]{Observation}
\newtheorem{test}[theorem]{Test}

\theoremstyle{definition}
\newtheorem{defn}[theorem]{Definition}
\newtheorem*{notation}{Notation}

%
%

%
\newcommand{\ab}[1]{#1^\mathrm{ab}}
\newcommand{\abi}[2]{\ab{S_{#1}}(#2)}
\newcommand{\bdry}{\partial}
\newcommand{\gap}{\textit{GAP}}
\newcommand{\hikmot}{\textit{HIKMOT}}
\newcommand{\hypmfd}{cusped finite-volume hyperbolic 3-manifold}
\newcommand{\magma}{\textit{Magma}}
\newcommand{\mb}{M{\"o}bius band}
\newcommand{\mfdb}{M}
\newcommand{\mfdi}{M^\circ}
\newcommand{\N}{\mathbb{N}}

\newcommand{\R}{\mathbb{R}}
\newcommand{\regina}{\textit{Regina}}

\newcommand{\snappea}{\textit{SnapPea}}
\newcommand{\snappy}{\textit{SnapPy}}
\newcommand{\tri}{\mathcal{T}}
\newcommand{\twisted}{\stackrel{\smash{\protect\raisebox{-1mm}[0pt][0pt]%
    {$\scriptstyle\sim$}}}{\times}}
\newcommand{\Z}{\mathbb{Z}}

%
%

\begin{document}

\title{The cusped hyperbolic census is complete}
\author{Benjamin A.\ Burton}
\address{School of Mathematics and Physics \\
    The University of Queensland \\
    Brisbane QLD 4072 \\
    Australia}
\email{bab@maths.uq.edu.au}
\thanks{Supported by the Australian Research Council
    under the Discovery Projects funding scheme (project DP1094516).
    Computational resources were provided by the
    Queensland Cyber Infrastructure Foundation.
    The author also offers his warm thanks to the visitors and
    staff at ICERM, Brown University, where much of this work was undertaken.}
\subjclass[2000]{%
    Primary
    57-04, 
    57N10; 
    Secondary
    57Q15, 
    57N16} 
\keywords{3-manifolds, hyperbolic manifolds, census, exact computation}

\begin{abstract}
    From its creation in 1989 through subsequent extensions,
    the widely-used ``SnapPea census'' now aims to represent all
    cusped finite-volume hyperbolic 3-manifolds
    that can be obtained from $\leq 8$ ideal tetrahedra.
    Its construction, however, has relied on inexact
    computations and some unproven (though reasonable) assumptions,
    and so its completeness was never guaranteed.
    For the first time, we prove here that the census meets its aim:
    we rigorously certify that every ideal 3-manifold
    triangulation with $\leq 8$ tetrahedra is either (i)~homeomorphic to
    one of the census manifolds, or (ii)~non-hyperbolic.

    In addition, we extend the census to 9~tetrahedra, and
    likewise prove this to be complete.
    We also present the first list of all minimal triangulations of
    all census manifolds, including non-geometric as well as
    geometric triangulations.
\end{abstract}

\maketitle

%
%

\section{Introduction}

Over its quarter-century history,
the ``SnapPea census'' of cusped finite-volume hyperbolic 3-manifolds
has been an invaluable resource for low-dimensional topologists.
In its modern form it contains
21\,918 cusped 3-manifolds\footnote{%
    The original census listed 21\,919 manifolds,
    but two were later found to be homeomorphic~\cite{burton13-duplicate}.},
believed to represent all cusped finite-volume hyperbolic 3-manifolds
that can be built from $n \leq 8$ ideal tetrahedra.
The original census was created in 1989 by Hildebrand and Weeks for $n \leq 5$
\cite{hildebrand89-cuspedcensusold}, and was later expanded by
Callahan, Hildebrand and Weeks for $n=6,7$ and Thistlethwaite
for $n=8$ \cite{callahan99-cuspedcensus,thistlethwaite10-cusped8}.
Portions of the census are now shipped with topological software
packages such as {\snappy} \cite{snappy} and {\regina} \cite{regina}.

Despite its long history, however, questions of accuracy remain unresolved.
The key issues are that
(i)~those manifolds included in the census are only
those for which the software {\snappea} \cite{snappea} identifies a
\emph{geometric triangulation}---one that decomposes the manifold into
positive-volume ideal hyperbolic tetrahedra; and that
(ii)~{\snappea} uses floating point arithmetic to test whether a
triangulation is geometric.
In theory, this allows for several types of error:
\begin{enumerate}
    \item \label{en-err-falsepos}
    {\snappea} might incorrectly identify a non-geometric triangulation as
    geometric, due to numerical approximation errors;
    \item \label{en-err-falseneg}
    {\snappea} might incorrectly identify a geometric triangulation as
    non-geo\-met\-ric, due to either approximation errors or numerical
    instability (where successive approximations to a geometric
    structure fail to converge);
    \item \label{en-err-nongeom}
    {\snappea} might fail to identify a manifold as hyperbolic
    because all of its
    triangulations with $\leq n$ tetrahedra are non-geometric.
\end{enumerate}

Issue~(\ref{en-err-falsepos}) could lead to false positives.
This possibility can now be eliminated using the techniques of
Moser \cite{moser09-proving} and Hoffman et~al.\ \cite{hoffman13-hikmot},
who use numerical methods to show that {\snappea}'s \emph{approximate}
geometric structure is indeed an approximation to an \emph{exact}
geometric structure.
In particular, Moser has shown that the $n \leq 7$-tetrahedron census
contains no false positives, and Hoffman et~al.\ have shown that the
orientable $n \leq 8$-tetrahedron census contains no false positives.

Issues~(\ref{en-err-falseneg}) and~(\ref{en-err-nongeom})
could lead to false negatives.  Numerical methods alone cannot solve
this: even if we could prove conclusively that a triangulation is
non-geometric (which the numerical methods above cannot),
issue~(\ref{en-err-nongeom}) means that we could still fail to identify
a manifold as hyperbolic because \emph{all} of its
$\leq n$-tetrahedron triangulations
are non-geometric.  Indeed, it is still not known whether there might
exist cusped hyperbolic 3-manifolds with no geometric triangulations at all.

Our main result here is to resolve
issues~(\ref{en-err-falseneg}) and~(\ref{en-err-nongeom}),
and thus---after 25~years---rigorously prove
that the SnapPea census has no false negatives.  Specifically:

\begin{theorem} \label{t-falseneg}
    Every ideal 3-manifold triangulation with $n \leq 8$ tetrahedra is either
    (i)~homeomorphic to one of the manifolds in the
    Callahan-Hildebrand-Thistlethwaite-Weeks census tables, or
    (ii)~certified to represent a non-hyperbolic manifold.
\end{theorem}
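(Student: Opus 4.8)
The plan is to settle Theorem~\ref{t-falseneg} by one large but entirely exact computation. Write $M(\tri)$ for the manifold that an ideal triangulation $\tri$ represents. The first step is to enumerate, up to combinatorial isomorphism, every ideal $3$-manifold triangulation with $n\le 8$ tetrahedra --- that is, every triangulation all of whose vertex links are tori or Klein bottles --- using {\regina}'s census-generation machinery. Since this family is huge, I would prune it by discarding any triangulation that admits a size-reducing Pachner-type move (on a low-degree edge or vertex) or that carries a non-vertex-linking normal sphere or disc: such a triangulation collapses to a strictly smaller triangulation of the same manifold, which is already treated at a lower tetrahedron count, so it suffices to classify the finitely many ``irreducible'' triangulations at each level. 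Triangulations having a vertex link of genus $\ge 2$ cannot arise from a \hypmfd{} and are certified non-hyperbolic on the spot.

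For each surviving triangulation $\tri$ I would then try to identify $M(\tri)$ with a census manifold using exact means only. Searching over sequences of Pachner moves --- permitting a bounded temporary increase in the number of tetrahedra --- and testing the triangulations so produced for combinatorial isomorphism against a precomputed catalogue of the census manifolds yields, whenever it succeeds, a rigorous homeomorphism $M(\tri)\cong N$ with $N$ in the census, so $\tri$ falls into case~(i); for those triangulations that {\hikmot}-style interval arithmetic certifies to be hyperbolic I would reinforce this by comparing canonical (Epstein--Penner) retriangulations, which are genuine combinatorial invariants. As a byproduct this step yields, for every census manifold, the complete list of its minimal triangulations. Every triangulation \emph{not} matched here must instead receive a rigorous certificate of non-hyperbolicity, and the mathematical input for this is geometrization together with the classification of essential surfaces: a compact $3$-manifold whose boundary is a union of tori and Klein bottles fails to be hyperbolic precisely when it contains an essential sphere, disc, annulus, or torus, or is one of finitely many exceptional small Seifert-fibred pieces (such as $T^2\times I$ or the twisted $I$-bundle over the Klein bottle). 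Each such obstruction is witnessed by a normal or almost normal surface, which {\regina} enumerates in exact integer arithmetic; I would locate such a surface and verify combinatorially that it is essential (incompressible and not boundary-parallel), or else recognise $M(\tri)$ outright as one of the small exceptional manifolds.

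The hard part is not any single step but making the classification \emph{exhaustive}: after the exact matching and the normal-surface search, no triangulation may be left without a verdict. The awkward cases are the irreducible triangulations whose manifolds are hyperbolic --- conjecturally all of them census manifolds --- for which greedy simplification need not reach a census triangulation and normal surface theory produces nothing useful; these must be dispatched by certifying hyperbolicity with interval methods and then matching via the canonical retriangulation. Keeping the combinatorial explosion at $n=8$ (and at $n=9$ for the announced extension) within computational reach while guaranteeing that each of the millions of enumerated triangulations is driven to a definitive exact conclusion --- either ``$\cong$ a census manifold'' or ``certifiably non-hyperbolic'' --- is the genuine obstacle, and the whole argument stands or falls with it.
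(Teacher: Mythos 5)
Your overall two-stage architecture (constrained enumeration, then per-triangulation certification) matches the paper, but the justification you give for the crucial pruning step is a genuine gap. You discard every triangulation carrying a non-vertex-linking normal sphere (and every one admitting a size-reducing move on a low-degree edge) on the grounds that it ``collapses to a strictly smaller triangulation of the same manifold.'' In the ideal setting that is precisely what cannot be assumed: crushing a normal surface can change the topology (the flattening stage can collapse discs, annuli and {\mb}s, and can even create invalid edges), and cutting along a sphere and capping with balls need not return the original manifold once the side-effects of flattening are taken into account. The paper's Section~\ref{s-min} exists to supply the missing argument: using barrier surfaces and \emph{outermost} boundary-parallel normal surfaces it proves Lemma~\ref{l-crushsame} and Corollary~\ref{c-noparallel} (in both the orientable and non-orientable cases), and deduces that a \emph{minimal} ideal triangulation of a {\hypmfd} has no internal vertices, no normal $2$-spheres and no low-degree edges (Theorems~\ref{t-nointernal}, \ref{t-nospheres}, \ref{t-lowdeg}). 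That is what makes the pruning safe --- it guarantees that every cusped hyperbolic manifold with an $\leq 8$-tetrahedron ideal triangulation retains a representative (a minimal triangulation) in the pruned list --- and without some such theorem your induction ``already treated at a lower tetrahedron count'' has no foundation. Note also that eliminating degree-2 edges is not a routine Pachner-type reduction here; the paper proves it by a subdivision-plus-crushing argument inside the same machinery.

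A second, structural problem is that your endgame demands a rigorous certificate of \emph{non-hyperbolicity} for every unmatched triangulation, witnessed by a normal sphere, disc, annulus or torus that you then verify to be \emph{essential}. Verifying incompressibility and non-boundary-parallelism is exactly the expensive step the paper is designed to avoid: because a verdict of ``non-minimal'' is just as good as ``non-hyperbolic'' for discarding a triangulation, its tests only need to exhibit a normal sphere, a torus or Klein bottle that fails to bound a solid torus or solid Klein bottle, or an annulus with a specific splitting behaviour --- no essentiality check is ever performed. Your scheme also has no fall-back for a surviving triangulation of a hyperbolic manifold that {\snappea}/{\hikmot} cannot certify and that a bounded Pachner search cannot match to the census: it is neither matchable nor non-hyperbolic, so your dichotomy can fail to return a verdict. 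The paper closes exactly this loophole by letting the exhaustive Pachner search output ``non-minimal'' (a strictly smaller retriangulation) without identifying the underlying manifold, which is how its final 396 unresolved cases are dispatched.
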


The proof, which is computationally intensive and algorithmically
non-trivial, involves two major stages.  The first is to enumerate all
ideal 3-manifold triangulations with $\leq 8$ tetrahedra,
under several combinatorial constraints that we prove in Section~\ref{s-min}.
The second is to certify that every one of the resulting manifolds
either matches one of the Callahan-Hildebrand-Thistlethwaite-Weeks
census manifolds or is non-hyperbolic.  All computations are
exact, thus avoiding numerical errors.
We discuss details of these two stages
in Sections~\ref{s-gen} and~\ref{s-process} respectively.

By combining Theorem~\ref{t-falseneg} with the ``no false positives''
results of Moser \cite{moser09-proving} and
Hoffman et~al.\ \cite{hoffman13-hikmot}, and by running the latter
authors' software {\hikmot} over the non-orientable 8-tetrahedron census
(which neither paper \cite{hoffman13-hikmot,moser09-proving} examines),
we can finally show that the SnapPea census meets its original aim:

\begin{corollary} \label{c-complete8}
    The Callahan-Hildebrand-Thistlethwaite-Weeks census tables
    exactly represent all cusped finite-volume hyperbolic 3-manifolds
    that can be constructed from $n \leq 8$ ideal tetrahedra,
    with no intruders (false positives) and no omissions (false negatives).
\end{corollary}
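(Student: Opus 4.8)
The plan is to deduce the corollary from Theorem~\ref{t-falseneg} together with the published ``no false positives'' results, closing the single remaining gap by a direct rigorous computation. Writing $\mathcal{C}$ for the set of homeomorphism types in the Callahan-Hildebrand-Thistlethwaite-Weeks tables and $\mathcal{H}$ for the set of cusped finite-volume hyperbolic 3-manifolds that admit an ideal triangulation with $n\le 8$ tetrahedra, it suffices to prove the two inclusions $\mathcal{H}\subseteq\mathcal{C}$ (no false negatives/omissions) and $\mathcal{C}\subseteq\mathcal{H}$ (no false positives/intruders).

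\emph{No omissions.} Let $M\in\mathcal{H}$, obtained by gluing $n\le 8$ ideal tetrahedra along their faces. That gluing is an ideal triangulation $\tri$ of $M$, and since deleting the ideal vertices recovers the manifold $M$ (whose cusp cross-sections are tori or Klein bottles), $\tri$ is an ideal 3-manifold triangulation with $n\le 8$ tetrahedra. Theorem~\ref{t-falseneg} offers two alternatives for $\tri$; because the certificate of non-hyperbolicity in alternative~(ii) is rigorous, the hyperbolicity of $M$ rules it out, so $M\in\mathcal{C}$. (The proof of Theorem~\ref{t-falseneg} moreover catalogues every minimal triangulation of every census manifold and identifies each by a homeomorphism invariant, so that---after discarding the one known homeomorphic pair \cite{burton13-duplicate}---the tabulated manifolds are seen to be pairwise distinct.)

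\emph{No intruders.} Each census manifold $M$ comes, by construction, with an ideal triangulation of $n\le 8$ tetrahedra, so it remains only to certify that $M$ is genuinely hyperbolic; finite volume and cuspedness then follow automatically, since a certified solution of Thurston's gluing and completeness equations with all tetrahedron shapes in the upper half-plane determines a complete finite-volume hyperbolic structure with the prescribed torus or Klein-bottle cusps. For $n\le 7$ this is Moser's theorem \cite{moser09-proving}, and for the orientable $8$-tetrahedron manifolds it is the theorem of Hoffman et~al.\ \cite{hoffman13-hikmot}, whose program {\hikmot} verifies by interval arithmetic that {\snappea}'s approximate shapes lie in a region provably enclosing a true geometric solution. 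The one case addressed by neither \cite{moser09-proving} nor \cite{hoffman13-hikmot} is the non-orientable $8$-tetrahedron list; for these we run {\hikmot} ourselves---directly, or on the orientation double cover, whose hyperbolicity is equivalent to that of $M$---and record that every one is certified hyperbolic. Combining the inclusions gives $\mathcal{C}=\mathcal{H}$, which is the corollary.

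\emph{The main obstacle.} With Theorem~\ref{t-falseneg} in hand almost all of the difficulty has been met, and the genuinely new ingredient is the {\hikmot} pass over the non-orientable $8$-tetrahedron census. The delicate point is that {\hikmot} is a \emph{verification}, not a decision procedure: its Krawczyk/interval-Newton step may fail to contract even on a geometric triangulation, so a successful certificate for \emph{every} entry is not guaranteed a priori. The remedy is to replace any recalcitrant triangulation by another triangulation of the same manifold---via randomisation in {\snappy}, or by passing to the double cover---until the test goes through. One must also reconcile the slightly varying notions of ``census'' used in \cite{moser09-proving,hoffman13-hikmot} (after accounting for the duplicate pair and for orientability conventions) with the enumeration underlying Theorem~\ref{t-falseneg}, so that the two inclusions genuinely concern the same list of manifolds.
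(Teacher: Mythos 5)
Your proposal is correct and follows essentially the same route as the paper: Theorem~\ref{t-falseneg} supplies the no-omissions direction, and the Moser and Hoffman et~al.\ certifications, supplemented by running {\hikmot} over the non-orientable $8$-tetrahedron list (via the orientable double cover), supply the no-intruders direction. Your closing remark about reconciling the various census conventions with the enumeration behind Theorem~\ref{t-falseneg} corresponds to the paper's cross-check in Section~\ref{s-census} that each manifold class with an $n\le 8$-tetrahedron triangulation contains a Callahan-Hildebrand-Thistlethwaite-Weeks representative and vice versa.
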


\begin{table}[tb]
\caption{Summary of census data} \label{tab-census}
\begin{tabular}{c|rrr|rrr}
    & \multicolumn{3}{c|}{\bf Manifolds} &
          \multicolumn{3}{c}{\bf Minimal triangulations} \\
    Tetrahedra & Orbl & Non-orbl & Total & Orbl & Non-orbl & Total \\
    \hline
    1 & 0 & 1 & 1                  & 0 & 1 & 1 \\
    2 & 2 & 2 & 4                  & 2 & 3 & 5 \\
    3 & 9 & 7 & 16                 & 10 & 11 & 21 \\
    4 & 56 & 26 & 82               & 75 & 60 & 135 \\
    5 & 234 & 78 & 312             & 360 & 179 & 539 \\
    6 & 962 & 258 & 1\,220         & 1\,736 & 801 & 2\,537 \\
    7 & 3\,552 & 887 & 4\,439      & 7\,413 & 3\,202 & 10\,615 \\
    8 & 12\,846 & 2\,998 & 15\,844 & 30\,450 & 12\,777 & 43\,227 \\
    9 & 44\,250 & 9\,788 & 54\,038 & 122\,136 & 49\,896 & 172\,032 \\
    \hline
    Total & 61\,911 & 14\,045 & 75\,956 & 162\,182 & 66\,930 & 229\,112
\end{tabular}
\end{table}

In addition, we use our techniques to extend the census beyond its
current limits, up to $n \leq 9$ tetrahedra.
This involves enumerating and processing
more than $1.5$~million ideal triangulations
in total, yielding a final list of 75\,956 census manifolds.
Table~\ref{tab-census} summarises the new census data, and
Section~\ref{s-census} describes how the final computations tie together
our various theoretical results.
This 9-tetrahedron census
is likewise rigorously guarded against both false positives
(using {\hikmot}) and false negatives (using the techniques in this
paper), and so we obtain:

\begin{theorem} \label{t-complete9}
    The new census data outlined in Table~\ref{tab-census}
    exactly represents all cusped finite-volume hyperbolic 3-manifolds
    that can be constructed from $n \leq 9$ ideal tetrahedra,
    with no intruders and no omissions.
\end{theorem}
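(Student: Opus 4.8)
The plan is to run exactly the two-stage machinery that establishes Theorem~\ref{t-falseneg}, but with the tetrahedron bound raised from $8$ to $9$, and then to combine the resulting false-negative-free list with the false-positive guarantees obtained from {\hikmot}. Nothing conceptually new is needed beyond the theory of Section~\ref{s-min} and the algorithms of Sections~\ref{s-gen}--\ref{s-process}; the work is in pushing the computation through at the larger scale and in the bookkeeping that turns its output into Table~\ref{tab-census}.

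Stage one: enumerate every ideal 3-manifold triangulation with $n \leq 9$ tetrahedra. A naive enumeration is hopeless at this size, so I would lean on the combinatorial reductions proved in Section~\ref{s-min}: it suffices to generate only those triangulations satisfying the constraints established there, since every cusped hyperbolic manifold built from $\leq 9$ tetrahedra has at least one triangulation meeting those constraints. Feeding these constraints into {\regina}'s isomorphism-signature-based census backtracking (so that non-admissible partial triangulations are pruned as early as possible) produces the roughly $1.5$~million candidate triangulations, with one representative recorded per combinatorial isomorphism class.

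Stage two: for each candidate $\tri$, apply the exact certification pipeline of Sections~\ref{s-gen}--\ref{s-process}, entirely over exact arithmetic. This terminates either by certifying that the underlying manifold is non-hyperbolic (by exhibiting an essential sphere, disc, annulus or torus, or by recognising a reducible or Seifert-fibred or otherwise non-hyperbolic piece), or by matching the manifold combinatorially against a minimal triangulation already listed in the census tables---via isomorphism signatures, so no numerical comparison of geometric invariants is involved. Any manifold that survives as hyperbolic and does not already appear in the $n \leq 8$ tables is recorded as a new $9$-tetrahedron entry; the totals in Table~\ref{tab-census} are precisely the result of this accounting. Finally, I would run {\hikmot} on every $9$-tetrahedron census manifold, orientable and non-orientable, to verify that {\snappea}'s approximate geometric structure is a genuine approximation to an exact one, giving ``no intruders''; together with stage two this yields ``no omissions'' as well.

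The main obstacle is the combination of sheer scale in stage one with the delicacy of stage two. The pruning criteria imported from Section~\ref{s-min} must be provably lossless---losing even a single triangulation class could silently drop a manifold---and the exact recognition in stage two must return a definite verdict on every one of the million-plus inputs, with no ``undecided'' outcome permitted, so the catalogue of non-hyperbolicity certificates has to be exhaustive for the manifolds that actually arise at this complexity. Beyond that, the remaining difficulty is engineering: organising the parallel computation, storage, and reproducibility so that the enumeration and certification of $n=9$ are as trustworthy as the (independently re-verified) $n \leq 8$ case.
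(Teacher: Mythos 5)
Your high-level plan — enumerate under the combinatorial constraints of Section~\ref{s-min}, run the exact non-hyperbolicity tests of Section~\ref{s-process}, and use {\hikmot} to certify the survivors as genuinely hyperbolic — is the same two-stage strategy the paper uses, and the appeal to {\hikmot} on both orientable and non-orientable manifolds correctly delivers ``no intruders.'' But two points in your stage-two description depart from what the paper actually does and would not work as stated.

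First, you claim that hyperbolic survivors are identified ``by matching the manifold combinatorially against a minimal triangulation already listed in the census tables---via isomorphism signatures, so no numerical comparison of geometric invariants is involved.'' Isomorphism signatures distinguish triangulations up to relabelling, not manifolds up to homeomorphism; a manifold typically has many inequivalent minimal ideal triangulations (Table~\ref{tab-census} records over three triangulations per manifold on average), so raw signature matching cannot, by itself, group the $281\,453$ certified-hyperbolic triangulations into $75\,956$ manifold classes, nor does it tell you which $n=9$ survivors are genuinely new versus retriangulations of an $n\le 8$ manifold. The paper does this by recording the Epstein--Penner decomposition that {\snappea} computes for each triangulation. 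This step \emph{is} numerical, but it is sound for this purpose because {\snappea}'s canonical-cell routine proceeds by Pachner moves: even a numerically wrong decomposition still demonstrably represents the same underlying manifold, so grouping by decomposition can over-split but never over-merge. The over-splits (the initial $76\,000$ classes) are then healed by a bounded exhaustive Pachner search, which is where isomorphism signatures are actually used. Your all-combinatorial alternative would need an exhaustive Pachner search from every hyperbolic triangulation with no prior grouping, which is a materially different and much heavier computation.

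Second, you assert that the ``exact recognition in stage two must return a definite verdict on every one of the million-plus inputs, with no `undecided' outcome permitted.'' That is not the paper's requirement, and it is good that it isn't: the test suite leaves $396$ triangulations unresolved, and the paper then closes the gap with a separate fallback (rerunning {\snappea} with a different seed for a handful of crash cases, and bounded exhaustive Pachner searches with $h=3$ and $h=4$ to either shrink the triangulation or reach an already-certified one). A proof that insists the suite itself be exhaustive would be claiming something the tests do not provide; the fallback mechanism is an essential part of the argument and needs to appear explicitly in your stage two.

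Finally, and more minor: the theorem as stated is silent on duplicates, and so is your proof — correctly so, since distinctness of the $75\,956$ manifolds is deferred to Theorem~\ref{t-dup} via low-index subgroup abelianisations. But it is worth being aware that ``exactly represents'' in Table~\ref{tab-census} is only fully justified once that later result is in hand.
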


In Section~\ref{s-dup} we resolve the issue of \emph{duplicates}.
That is, we prove that no manifold appears more than once in the
final census tables.  This is a real possibility---indeed, the
original $n \leq 7$-tetrahedron census contained a duplicate pair
that was not caught until 14~years after that census was published
\cite{burton13-duplicate}.
In this paper we use standard group-theoretical techniques to show:

\begin{theorem} \label{t-dup}
    No two of the $75\,956$ manifolds from Table~\ref{tab-census}
    are homeomorphic.
\end{theorem}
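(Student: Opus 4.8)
The plan is to distinguish the $75\,956$ manifolds purely through their fundamental groups. Since homeomorphic manifolds have isomorphic fundamental groups, it suffices to show that the groups $\pi_1(M_1),\dots,\pi_1(M_{75\,956})$ obtained from (minimal) triangulations of the census manifolds are pairwise non-isomorphic; every invariant used below is extracted by exact combinatorial and group-theoretic computation, so no numerical issues arise. I would begin with the coarsest invariants: orientability and number of cusps (both read directly off each triangulation) and the first homology $H_1(M_i)=\ab{\pi_1(M_i)}$, computed as the abelianization of a finite presentation. Partitioning the census by the triple (orientability, number of cusps, $H_1$) already separates the overwhelming majority of manifolds, leaving only a manageable collection of small ``clusters'' of manifolds that agree on all three.

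Within each remaining cluster I would apply the standard refinement by low-index subgroups. For each group $G=\pi_1(M_i)$ and each index $k=2,3,\dots$, use the low-index-subgroup machinery in {\gap} or {\magma} to enumerate all subgroups $S\leq G$ with $[G:S]=k$ (up to conjugacy, with multiplicities), and record the multiset $\abi{k}{M_i}$ of abelianizations $\ab{S}$. If $\pi_1(M_i)\cong\pi_1(M_j)$ then $\abi{k}{M_i}=\abi{k}{M_j}$ for every $k$, so any cluster whose members' multisets differ at some index $k$ is fully resolved. Incrementing $k$ only as far as needed, and only for the clusters that genuinely require it, I expect to reach a small index $k_0$ at which every pair of census manifolds is separated, and I would record $k_0$ together with the computational cost.

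The main obstacle is twofold. First, feasibility: the number of index-$k$ subgroups grows quickly, especially for manifolds with several cusps or with $H_1$ of large rank, so this refinement must be applied sparingly and with $k$ pushed no further than necessary. Second, and more seriously, there is no a priori guarantee that abelianizations of low-index subgroups separate two given non-homeomorphic manifolds at any feasible index; should a stubborn pair survive, one must fall back on finer data---comparing the isomorphism types of the subgroups themselves at the relevant indices, or the subgroup-growth sequence (the number of subgroups of each index)---and, as a certified last resort, on Mostow rigidity together with the hyperbolicity established in Corollary~\ref{c-complete8} and Theorem~\ref{t-complete9}. I anticipate that in practice the homology-plus-subgroup-abelianization invariants already suffice, with at most a handful of clusters requiring the deeper index computations, so that the bulk of the proof consists in documenting precisely which invariants were used, the maximal index reached, and the software employed.
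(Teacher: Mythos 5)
Your approach matches the paper's: it distinguishes all $75\,956$ manifolds by orientability, $H_1=\ab{\pi_1}$, and the multisets of abelianisations of low-index subgroups $\abi{i}{\pi_1}$ (computed exactly in {\magma}), applied in stages of increasing index $i$ and only to the manifolds still ambiguous at each stage. The paper finds that index $i\leq 11$ suffices (the last eight ambiguous pairs are non-orientable manifolds with coinciding volumes and homeomorphic orientable double covers), so none of your proposed fallbacks are needed.
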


Again this result relies on exact computation; in particular,
inexact floating-point invariants such as
hyperbolic volume and shortest geodesic are not used.
A consequence of this result is that all $229\,112$ triangulations
in our census are proven to be minimal, and so we have:

\begin{corollary} \label{c-min}
    The $229\,112$ triangulations from Table~\ref{tab-census}
    are precisely all minimal ideal triangulations of all
    {\hypmfd}s with $n \leq 9$ tetrahedra.
\end{corollary}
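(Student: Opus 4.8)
The plan is to read Corollary~\ref{c-min} off from the construction of the census, using Theorems~\ref{t-complete9} and~\ref{t-dup} together with the combinatorial results of Section~\ref{s-min}; no genuinely new mathematics is required. Write $\sfcs$ for the set of $229\,112$ triangulations appearing in Table~\ref{tab-census}. It suffices to establish two inclusions: \textbf{(a)}~every $\tri \in \sfcs$ is a minimal ideal triangulation of a {\hypmfd} that can be built from at most $9$ ideal tetrahedra; and \textbf{(b)}~conversely, every minimal ideal triangulation of such a manifold already belongs to $\sfcs$.

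For~\textbf{(a)}, fix $\tri \in \sfcs$, say with $k \le 9$ tetrahedra and underlying manifold $M$. Since $\tri$ survived the enumeration of Section~\ref{s-gen} and the certification of Section~\ref{s-process}, Theorem~\ref{t-complete9} tells us that $M$ is hyperbolic and is one of the census manifolds. Suppose $\tri$ were \emph{not} minimal, and let $\tri''$ be a minimal ideal triangulation of $M$; since $\tri$ is non-minimal, $\tri''$ has some number $m < k$ of tetrahedra. By Section~\ref{s-min}, $\tri''$ satisfies the combinatorial constraints under which Section~\ref{s-gen} enumerates, hence is produced by that enumeration at level $m$; being a triangulation of the hyperbolic manifold $M$ it is not discarded by the certification of Section~\ref{s-process}, so $M$ is recorded at level $m < k$. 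Consequently, by the time $\tri$ is processed at level $k$, the manifold $M$ has already been seen, and the construction will discard $\tri$ as a non-minimal triangulation of an already-recorded manifold, provided it recognises that the manifold of $\tri$ is $M$. It must so recognise, for otherwise that manifold would survive to the final tables as a second copy of $M$, contradicting Theorem~\ref{t-dup}. Hence $\tri$ is discarded, contradicting $\tri \in \sfcs$; so $\tri$ is minimal.

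For~\textbf{(b)}, let $\tri$ be a minimal ideal triangulation of a {\hypmfd} $M$ that admits an ideal triangulation with at most $9$ tetrahedra; then $\tri$ itself has some number $k \le 9$ of tetrahedra. By Section~\ref{s-min}, $\tri$ meets the constraints imposed in Section~\ref{s-gen}, so $\tri$ is produced by the enumeration at level $k$. Since $M$ is hyperbolic, the certification of Section~\ref{s-process} does not discard $\tri$ as non-hyperbolic. Finally, because $\tri$ is minimal, $M$ possesses no ideal triangulation with fewer than $k$ tetrahedra, so $M$ is not recorded at any level below $k$; the construction therefore cannot discard $\tri$ as non-minimal, and $\tri$ is retained, i.e.\ $\tri \in \sfcs$. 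Combining~\textbf{(a)} and~\textbf{(b)} proves the corollary.

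The step I expect to demand the most care is not an argument but a matter of pinning down exactly what the census pipeline does: one must check that the combinatorial constraints established in Section~\ref{s-min} are genuine \emph{necessary} conditions for a triangulation to be minimal (so that the enumeration of Section~\ref{s-gen} misses no minimal triangulation whatsoever), and that the duplicate-detection employed during the construction is precisely the procedure whose correctness is guaranteed by Theorem~\ref{t-dup} (so that ``non-minimal'' is never confused with ``genuinely new'', nor the reverse). Once those two points are fixed, every remaining step above is immediate.
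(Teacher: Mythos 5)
Your proof is correct and follows the same route the paper takes: the paper derives Corollary~\ref{c-min} as an immediate consequence of Theorem~\ref{t-dup} (combined with Theorems~\ref{t-enumerate} and~\ref{t-complete9} and the description of the pipeline in Section~\ref{s-census}), stating it in one sentence, and your argument simply fills in the two inclusions that remark elides. In particular, your observation that Theorem~\ref{t-dup} is precisely what certifies the class-grouping was done correctly — so that "fewest tetrahedra in a class" coincides with genuine minimality and no minimal triangulation is mislaid in a duplicate class — is exactly the point on which the paper's terse remark relies.
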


This gives us the first comprehensive database of
\emph{all minimal triangulations} of all census manifolds,
including both geometric and non-geometric triangulations.
One immediate application would be in studying the conjecture that
every cusped hyperbolic 3-manifold has a geometric triangulation.

As a final note: In the late 1980s,
Adams and Sherman studied the minimum number of
ideal tetrahedra required to build a $k$-cusped hyperbolic 3-manifold
\cite{adams91-ideal}.  One of their results was the following:

\begin{theorem}[Adams and Sherman \cite{adams91-ideal}]
    The smallest number of ideal tetrahedra required to build a 5-cusped
    finite-volume hyperbolic manifold is $\sigma_{10}=10$.
\end{theorem}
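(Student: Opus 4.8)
The plan is to prove the two bounds separately: that ten ideal tetrahedra \emph{suffice} to build some $5$-cusped {\hypmfd} (the upper bound), and that nine or fewer \emph{never} suffice (the lower bound); together these give the claimed minimum of $10$. Given the main results of this paper the lower bound is almost immediate, so the bulk of the proof is a short deduction from Corollary~\ref{c-min} followed by a finite inspection of the census, while the upper bound only requires exhibiting one explicit example.

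For the lower bound, suppose for contradiction that $M$ is a $5$-cusped {\hypmfd} admitting an ideal triangulation with $k \le 9$ tetrahedra. Then $M$ has a minimal ideal triangulation, which uses at most $9$ tetrahedra, so by Corollary~\ref{c-min} it appears among the $229\,112$ triangulations listed in Table~\ref{tab-census}; equivalently, by Theorem~\ref{t-complete9}, $M$ is one of the $75\,956$ census manifolds of Table~\ref{tab-census}. It then suffices to read off the number of cusps of each of those manifolds from the exact census data and to observe that none of them equals $5$ (indeed the largest cusp number occurring is $4$). Since this data is obtained by exact computation the inspection is rigorous, and it contradicts the existence of $M$; hence every $5$-cusped {\hypmfd} requires at least $10$ ideal tetrahedra.

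For the upper bound, I would exhibit a single $5$-cusped manifold built from exactly $10$ ideal tetrahedra; a natural candidate is the complement of the minimally twisted $5$-chain link, which is well known to decompose into $10$ regular ideal hyperbolic tetrahedra. Concretely one writes down its face-pairing data, checks combinatorially (exactly as in Section~\ref{s-gen}) that this is an ideal $3$-manifold triangulation whose cusps are $5$ tori, and certifies finite-volume hyperbolicity either by the rigorous numerical method of {\hikmot} applied to Thurston's gluing equations, or by an exact argument exhibiting the complete hyperbolic structure (e.g.\ via a strict angle structure together with a combinatorial identification of the manifold). Combining the two bounds shows that the minimum is exactly $10$.

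The point worth emphasising is \emph{where the difficulty really lies}. If one were denied the results of this paper, the lower bound would be the hard part, and not for a superficial reason: a priori a $5$-cusped manifold might possess no geometric triangulation, or only geometric triangulations with many tetrahedra, so one cannot reason purely inside the world of positively oriented ideal tetrahedra---and this is exactly the gap that Corollary~\ref{c-min}, which controls \emph{all} minimal triangulations rather than just geometric ones, removes. The classical route, as in Adams and Sherman's original argument, instead works geometrically, bounding the cusp number from below by packing pairwise-disjoint maximal horoball neighbourhoods of the cusps and comparing their total horospherical area against the contribution a single ideal tetrahedron can make; on that route the cusp-counting estimate---rather than the final assembly of the example---is where the real work sits. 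With the enumeration of Sections~\ref{s-min}--\ref{s-process} in place, that estimate is simply replaced by the finite census check above.
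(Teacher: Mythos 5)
Your proposal is correct and follows essentially the same route the paper intends: the lower bound is exactly the ``alternative rigorous computer proof'' the author alludes to (by Theorem~\ref{t-complete9} every $5$-cusped {\hypmfd} with $\leq 9$ ideal tetrahedra would have to appear in the census, and an exact inspection shows none does), while the upper bound comes from a known explicit $10$-tetrahedron example such as the minimally twisted $5$-chain link complement, as in Adams and Sherman's original work. No substantive difference from the paper's (sketched) argument.
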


The proof was never given in full, since the detailed argument that no such
manifold exists for $n=9$ tetrahedra remains in an unpublished thesis of
Sherman \cite{sherman88-thesis}.
The results of this paper yield an alternative rigorous
computer proof.

The full database of all $75\,956$ census manifolds and all $229\,112$
minimal triangulations can be downloaded from the website
\url{http://www.maths.uq.edu.au/~bab/code/}, and will be included in the
coming release of \regina~4.96.

Most computations from this paper are performed using the software package
{\regina} \cite{burton04-regina,regina}; some computations also use
{\hikmot} \cite{hoffman13-hikmot}, {\magma} \cite{bosma97-magma}
and {\snappy} \cite{snappy}, and these are noted where they occur.

The author thanks the residents at ICERM during the 2013 fall semester,
and Saul Schleimer and Stephan Tillmann in particular,
for many stimulating discussions during the development of this work.

%
%

\section{Preliminaries} \label{s-prelim}

We begin in Sections~\ref{s-prelim-tri}--\ref{s-prelim-normal}
with essential facts about triangulations, hyperbolic manifolds
and normal surfaces.
Following this, Sections~\ref{s-prelim-barrier} and \ref{s-prelim-crushing}
outline two specialised techniques that play an important role in this
paper: \emph{barrier surfaces}, and \emph{crushing a normal surface}.
Both were developed by Jaco and Rubinstein to support their theory of
0-efficiency \cite{jaco03-0-efficiency};
here we present only what is required for this paper, and
we refer the reader to Jaco and Rubinstein's original paper for further
details.

All 3-manifolds in this paper are connected unless otherwise noted.
If $\mfdb$ is a compact 3-manifold with boundary, we let
$\mfdi$ denote its non-compact interior.  We explicitly note that there
are no restrictions on orientability in this paper (i.e., 3-manifolds
may be either orientable or non-orientable).


\subsection{Triangulations} \label{s-prelim-tri}

A \emph{generalised triangulation} $\tri$ is a collection of
$n$ abstract tetrahedra, some or all of whose triangular faces are affinely
identified or ``glued together'' in pairs.  The result need not be a
simplicial complex; in particular, we allow two faces of the same
tetrahedron to be identified, and we allow two tetrahedra to be
glued together along multiple pairs of faces.
Generalised triangulations may be disconnected (or even empty if $n=0$).
Each tetrahedron face that is not identified with some other face is
called a \emph{boundary triangle} of $\tri$.

These face identifications induce identifications between the edges of
tetrahedra, and each resulting class of identified edges is called a single
\emph{edge of the triangulation} $\tri$.
We define a \emph{vertex of the triangulation} $\tri$ similarly.
An \emph{invalid edge} is an edge of $\tri$ that (as a result of the
face identifications) is identified with itself in reverse.
It is common to find \emph{one-vertex triangulations}, in which
all $4n$ tetrahedron vertices are identified to a single point in $\tri$.

We interpret $\tri$ as a topological space using the identification topology.
If $v$ is a vertex of $\tri$, then the \emph{link} of $v$ is the
frontier of a small regular neighbourhood of $v$ in $\tri$.
If the link of $v$ is a disc then we call $v$ a \emph{boundary vertex},
and if the link of $v$ is a sphere then we call $v$ an \emph{internal vertex}.
If the link of $v$ is some other closed surface, then we call $v$ an
\emph{ideal vertex}.

If $\mfdb$ is a compact 3-manifold (with or without boundary) and
$\tri$ is homeomorphic to $\mfdb$, then we
say that $\tri$ is a \emph{triangulation of $\mfdb$}.
In this case, $\tri$ must have no invalid edges, and every vertex
link must be a disc or sphere.
Note that $\partial \mfdb$ is formed from the boundary triangles of $\tri$.

If $\mfdb$ is a compact 3-manifold with boundary, and if
$\tri$ becomes homeomorphic to the non-compact interior $\mfdi$
once its ideal vertices are removed,
then we say that $\tri$ is an \emph{ideal triangulation of $\mfdi$}.
In this case, $\tri$ must have no invalid edges and no boundary triangles,
and at least one vertex must be ideal (though there may be internal
vertices also).\footnote{%
    Many authors take a more restricted definition of
    ideal triangulations, in which all vertex links must be ideal.
    Although our definition is more liberal (by allowing internal vertices),
    we show in Theorem~\ref{t-nointernal} that for \emph{minimal}
    triangulations of {\hypmfd}s, both of these definitions coincide.}
We say that $\tri$ is a \emph{minimal ideal triangulation of $\mfdi$}
if there is no ideal triangulation of $\mfdi$ with fewer tetrahedra.

Given an ideal triangulation $\tri$ of $\mfdi$ as described
above, we can build a triangulation of the corresponding
compact manifold $\mfdb$ by \emph{truncating} the ideal vertices of
$\tri$.  This is a messy but straightforward procedure, in which we cut out
a small neighbourhood of every ideal vertex from every tetrahedron of
$\tri$, and then retriangulate the resulting truncated tetrahedra.

\emph{Pachner moves} \cite{pachner91-moves}
(also known as \emph{bistellar flips}) are local
combinatorial operations on triangulations: for any triangulation $\tri$
(either ideal or not), applying a Pachner move will result in a new
triangulation of the same manifold.
The most important Pachner moves for this
paper are the \emph{2-3 move}, in which two distinct tetrahedra joined
along a common triangle are replaced by three distinct tetrahedra surrounding
a common edge, and the \emph{3-2 move}, which is the inverse operation.


\subsection{Hyperbolic manifolds} \label{s-prelim-hyp}

Let $\mfdb$ be a compact 3-manifold with boundary whose interior
$\mfdi$ is a {\hypmfd}.

Then each boundary component of $\mfdb$ must be a torus or Klein bottle.
Moreover, {\mfdb} cannot contain any properly embedded surfaces that are:
\begin{itemize}
    \item \emph{essential spheres}, i.e., spheres that do not bound
    balls;
    \item \emph{projective planes} of any type;
    \item \emph{essential compression discs}, i.e., discs in $\mfdb$
    whose boundaries do not bound discs in $\bdry \mfdb$;
    \item \emph{essential tori}, i.e., tori that are
    $\pi_1$-injective and not homotopic into $\bdry \mfdb$;
    \item \emph{essential annuli}, i.e., annuli
    that are $\pi_1$-injective and not properly homotopic into $\bdry \mfdb$.
\end{itemize}
More strongly, any properly embedded torus or annulus in $\mfdb$
that is $\pi_1$-injective must be \emph{boundary-parallel}, i.e.,
properly isotopic into $\bdry \mfdb$.
See \cite{bonahon02-handbook,kapovich09-hyperbolic}
for detailed discussions on such results that cover both
orientable and non-orientable manifolds.

%
%
%
%

A \emph{geometric triangulation} of $\mfdi$ is an ideal triangulation
of $\mfdi$ where, in the context of a complete hyperbolic structure on $\mfdi$,
all vertices are ideal,
all edges are geodesics,
all triangular faces are portions of geodesic planes, and
every tetrahedron is positively oriented (i.e., has positive hyperbolic volume).
The software {\snappea} \cite{snappea} (and its successor
{\snappy} \cite{snappy}) can be used to test whether a given
triangulation $\tri$ is geometric and, if so, use $\tri$ to describe
the complete hyperbolic structure on the underlying manifold
(all subject to floating point approximations).

Any {\hypmfd} has a canonical \emph{Epstein-Penner cell decomposition}
\cite{epstein88-euclidean}:
two such 3-manifolds are homeomorphic if and only if their
Epstein-Penner decompositions are combinatorially isomorphic.
Given a triangulation $\tri$ that {\snappea} believes is geometric,
{\snappea} can attempt to compute the Epstein-Penner decomposition
for the underlying manifold \cite{weeks93-convex}.
Although numerical errors might cause {\snappea} to obtain the wrong
cell decomposition
\cite{burton13-duplicate}, the underlying algorithm is based on Pachner
moves, and so it is guaranteed that whatever cell decomposition it
\emph{does} compute will represent the same manifold as the
original triangulation $\tri$.


\subsection{Normal surfaces} \label{s-prelim-normal}

Let $\tri$ be a triangulation of a 3-manifold
($\tri$ may be ideal or non-ideal).
A \emph{normal surface} in $\tri$ is a properly embedded
surface $S$ that meets each tetrahedron of $\tri$ in a (possibly empty)
collection of curvilinear triangles and/or quadrilaterals, as
illustrated in Figure~\ref{fig-normaldiscs}.  We explicitly allow normal
surfaces to be disconnected, or even empty.  We do, however, insist in
this paper that a normal surface contains finitely many triangles and
quadrilaterals (i.e., we do not allow the non-compact
\emph{spun-normal surfaces} that can appear in ideal triangulations
\cite{tillmann08-finite}).
Two normal surfaces are \emph{normally isotopic} if they are related by
an ambient isotopy of $\tri$ that preserves each simplex of $\tri$.

\begin{figure}[tb]
    \centering
    \includegraphics[scale=0.6]{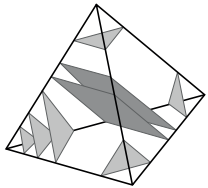}
    \caption{Normal triangles and quadrilaterals in a tetrahedron}
    \label{fig-normaldiscs}
\end{figure}

For any vertex $v$ of the triangulation $\tri$, the link of $v$ can be
expressed as a normal surface containing only triangles (i.e., no
quadrilaterals).  If we say that a normal surface $S$ is the
link of $v$, then we mean more precisely that $S$ is of this form.
Note that, as a normal surface, the link of $v$ is unique up to normal isotopy.
More generally, we say that a normal surface $S$ is \emph{vertex linking}
if it contains only triangles, or equivalently if $S$ is a
(possibly empty) union of vertex links.

If $\tri$ contains $n$ tetrahedra, then a normal surface in $\tri$ can
be specified by a non-negative vector in $\Z^{7n}$
(called \emph{standard coordinates}), or by a non-negative vector in
$\Z^{3n}$ (called \emph{quadrilateral coordinates}).
The vector in standard coordinates defines the surface up to normal
isotopy, and the vector in quadrilateral coordinates defines the surface
up to normal isotopy and addition\,/\,subtraction of vertex links.
In each coordinate system we identify a finite ``basis'' of
normal surfaces from which all others can be generated; essentially
these correspond to extreme rays of a polyhedral cone
\cite{jaco84-haken}.
The basis surfaces in each coordinate system are called
\emph{standard vertex normal surfaces} and
\emph{quadrilateral vertex normal surfaces} respectively.

For compact manifolds, the quadrilateral vertex normal surfaces are
a strict subset of the standard vertex normal surfaces, are typically
much faster to compute, and in many settings contain representatives of those
surfaces that are ``topologically interesting''.  In ideal triangulations,
the quadrilateral vertex normal surfaces can be much slower to compute,
there may be many more of them, and they often contain
non-compact \emph{spun-normal surfaces} with infinitely many
triangles, which we explicitly disallow in this paper.
See \cite{burton09-convert} for further details on the combinatorial and
computational relationships between the two systems.


\subsection{Barrier surfaces} \label{s-prelim-barrier}

Let $\mfdb$ be a compact 3-manifold with boundary,
and let $\tri$ be an ideal triangulation of the non-compact interior $\mfdi$.
Given any embedded closed surface $S \subset \mfdi$, there is
a well-known \emph{normalisation} process that converts $S$ into a
normal surface $N \subset \tri$.

The normal surface $N$ is obtained from $S$ by a series of
isotopies, compressions, and deletion of trivial sphere components.
The compressions may or may not be trivial (i.e., we might compress along
curves that are trivial in the surface).
Any sphere components that are deleted must be trivial (i.e., must
bound a ball in $\mfdi$).
The resulting normal surface $N$ might be disconnected, and might even
be empty.
See \cite[Section~3.1]{jaco03-0-efficiency} for a more detailed summary
of the normalisation process.

Normalisation can, in some cases, make widespread changes to the
original surface $S$.
The barrier surface technology of Jaco and Rubinstein allows us to
limit the scope of these changes, and thus retain more precise control
over the relationship between $S$ and $N$.  Here we outline just
those parts of the theory that we need here; for the full theory the
reader is referred to \cite[Section~3.2]{jaco03-0-efficiency}.

Let $B$ be an embedded closed surface in $\tri$, and let $C$ be some
connected component of the complement $\mfdi \backslash B$.
We say that $B$ is a \emph{barrier surface for $C$} if any embedded
closed surface in $C$ can be normalised entirely within $C$.  In other words,
when we normalise any closed surface $S \subset C$,
the normalisation process never isotopes the surface past the ``barrier'' $B$,
and never compresses along a disc that cuts through $B$.

Often the component $C$ of $\mfdi \backslash B$ is clear from context
(e.g., because it contains the surface $S$ that we are attempting to
normalise).
In this case we simply say that $B$ is a \emph{barrier surface}.
Amongst other examples, Jaco and Rubinstein show that all of the
following are barrier surfaces \cite[Theorem~3.2]{jaco03-0-efficiency}:
\begin{itemize}
    \item \emph{The boundary $B$ of a small regular neighbourhood of a
    subcomplex $\mathcal{K}$ of $\tri$.}
    Here the component $C$ of $\mfdi \backslash B$ must be some
    component not meeting the subcomplex $\mathcal{K}$.
    We often abuse terminology here and simply refer to
    $\mathcal{K}$ itself as a ``barrier to normalisation''.
    Important examples are where $\mathcal{K}$ is a vertex of $\tri$, or
    a single edge of $\tri$.

    \item \emph{The boundary $B$ of a small regular neighbourhood of
    a normal surface $F \subset \tri$.}
    Likewise, the component $C$ of $\mfdi \backslash B$ must be some
    component not meeting the normal surface $F$, and we often simply
    refer to $F$ itself as a ``barrier to normalisation''.

    \item \emph{A combination of the two cases above.}
    Specifically, let $F \subset \tri$ be a normal surface,
    let $\tri'$ be the cell decomposition induced by
    splitting $\tri$ along $F$ (so $\tri$ may contain truncated tetrahedra,
    triangular or quadrilateral prisms, and/or other non-tetrahedron pieces),
    and let $\mathcal{K}$ be a subcomplex of $\tri'$.
    Then the boundary $B$ of a small regular neighbourhood of
    $F \cup \mathcal{K}$ is a barrier surface.
    Once more the component $C$ of $\mfdi \backslash B$ must
    not meet $F \cup \mathcal{K}$, and we often
    refer to $F \cup \mathcal{K}$ itself as a ``barrier to normalisation''.

    An example of such a barrier in this paper appears in the proof of
    Theorem~\ref{t-nospheres}, where $F$ is a normal sphere and
    $\mathcal{K}$ is a fragment of an edge of $\tri$ that joins $F$
    to a vertex of $\tri$.
    A more complex example appears in the proof of
    Lemma~\ref{l-crushsame}, where the subcomplex $\mathcal{K}$ is an
    annulus or {\mb} embedded in the 2-skeleton of $\tri$ whose
    boundary runs along the normal surface $F$.
\end{itemize}


\subsection{Crushing normal surfaces} \label{s-prelim-crushing}

Many topological algorithms require us to cut a triangulation open along a
normal surface.  The problem with this operation is that it can be
extremely expensive: the number of tetrahedra in the triangulation may
grow exponentially as a result.

Jaco and Rubinstein introduce an alternative operation, called
\emph{crushing} \cite[Section~4]{jaco03-0-efficiency}.
This has the advantage that the number of tetrahedra
never increases (and indeed, strictly decreases if the surface is
non-trivial).  The disadvantage, however, is that the crushing operation
can have unintended topological side-effects.  Here we give a very brief
outline of the operation and its effects in our setting;
see \cite{jaco03-0-efficiency} for full details or
\cite{burton12-crushing-dcg} for a simplified treatment.

Let $\mfdb$ be a compact 3-manifold with boundary,
let $\tri$ be an ideal triangulation of the non-compact interior $\mfdi$,
and let $S$ be a two-sided normal surface in $\tri$.
To \emph{crush $S$ in $\tri$}, we perform the following steps:

\begin{figure}[tb]
    \centering
    \includegraphics[scale=0.6]{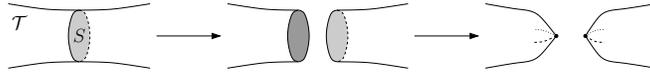}
    \caption{Cutting along $S$ and collapsing the resulting boundaries}
    \label{fig-crushbdry}
\end{figure}

\begin{figure}[tb]
    \centering
    \includegraphics[scale=0.8]{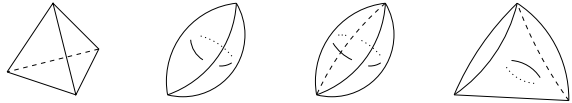}
    \caption{Different types of cells obtained after collapsing}
    \label{fig-crushpieces}
\end{figure}

\begin{enumerate}
    \item \label{en-crush-decomp}
    We cut $\tri$ open along the surface $S$
    and collapse the two resulting boundary components to points
    (Figure~\ref{fig-crushbdry}).
    This splits each tetrahedron of $\tri$
    into a collection of cells, each of which
    is either a tetrahedron, a three-sided football,
    a four-sided football, or a
    triangular purse (Figure~\ref{fig-crushpieces}).

    \begin{figure}[tb]
        \centering
        \includegraphics[scale=0.8]{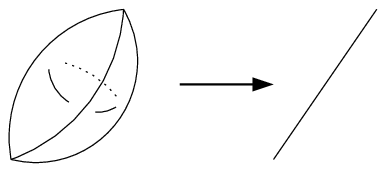}
        \hspace{0.8cm}
        \includegraphics[scale=0.8]{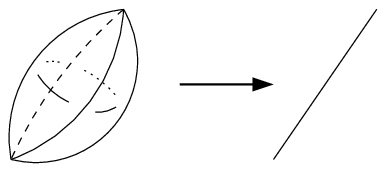}
        \hspace{0.8cm}
        \includegraphics[scale=0.8]{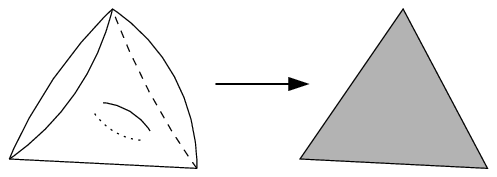}
        \caption{Flattening non-tetrahedron cells}
        \label{fig-crushflatten}
    \end{figure}

    \begin{figure}[tb]
        \centering
        \subfigure[Dangling edges and 2-faces are removed%
            \label{fig-clean-edgeface}]{
            \includegraphics[scale=0.47]{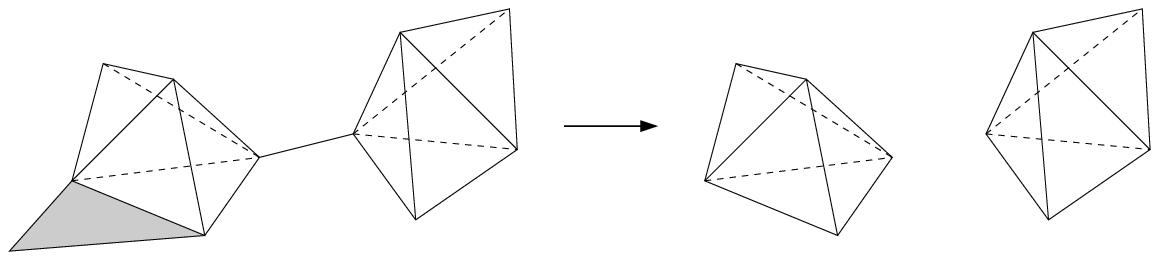}}
        \qquad
        \subfigure[Pinched tetrahedra fall apart%
            \label{fig-clean-pinch}]{
            \includegraphics[scale=0.47]{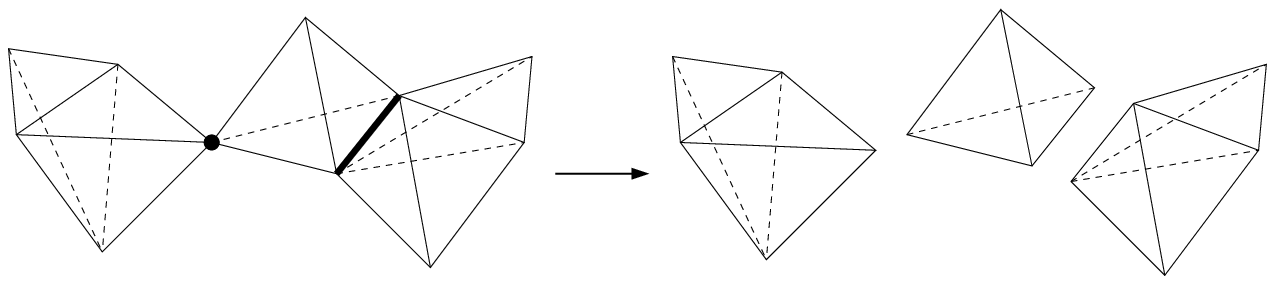}}
        \caption{Cleaning up after crushing}
        \label{fig-clean}
    \end{figure}

    \item \label{en-crush-flatten}
    We simultaneously flatten all footballs to edges
    and all triangular purses to triangular faces
    (Figure~\ref{fig-crushflatten}).
    Any ``dangling'' edges or 2-faces that do not belong to any
    tetrahedra are simply removed (Figure~\ref{fig-clean-edgeface}),
    and any tetrahedra that are
    ``pinched'' along edges or vertices simply fall apart
    (Figure~\ref{fig-clean-pinch}).
\end{enumerate}

The result will be a new generalised triangulation (possibly
disconnected or possibly even empty), and the topological type of
this triangulation is unclear:
\begin{itemize}
    \item The topological effect of
    step~(\ref{en-crush-decomp}) is simple to analyse.
    If $S$ is a sphere, then step~(\ref{en-crush-decomp}) effectively
    cuts along $S$ and fills the two new boundary spheres with balls.
    Otherwise step~(\ref{en-crush-decomp}) cuts along
    $S$ and converts the two new boundary components to ideal vertices,
    effectively producing an ``ideal cell decomposition'' of the
    non-compact manifold $\mfdi \backslash S$.

    \item Step~(\ref{en-crush-flatten}) is more problematic.
    In general, flattening footballs and triangular purses can further
    change the topology, and might even introduce invalid edges.
    Although the ``damage'' can be contained in some special cases
    (such as crushing discs or spheres in compact manifolds
    \cite{burton12-crushing-dcg,jaco03-0-efficiency}),
    in general one must be very careful about drawing any
    conclusions about the topology of the final triangulation.
\end{itemize}

\begin{figure}[tb]
\centering
\subfigure[Flattening a triangular pillow]{\label{fig-jrtripillow}%
    \includegraphics[scale=0.75]{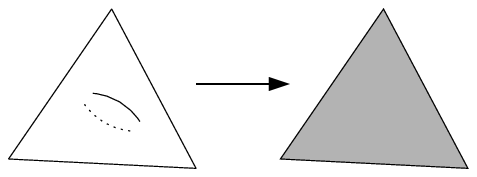}}
\hspace{0.45cm}
\subfigure[Flattening a bigonal pillow]{\label{fig-jrbipillow}%
    ~\includegraphics[scale=0.75]{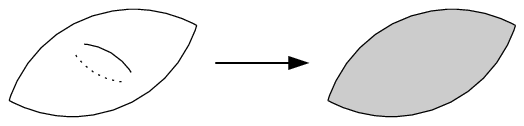}~}
\hspace{0.45cm}
\subfigure[Flattening a bigon]{\label{fig-jrbigon}%
    \includegraphics[scale=0.75]{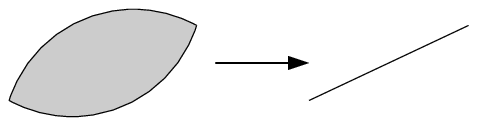}}
\caption{The three atomic operations}
\label{fig-atomic}
\end{figure}

To help understand the potential effect of step~(\ref{en-crush-flatten}),
we can use the \emph{crushing lemma} \cite{burton12-crushing-dcg}.
The crushing lemma shows that, instead of \emph{simultaneously} flattening all
footballs and purses in step~(\ref{en-crush-flatten}), we can replace
this with a \emph{sequence} of zero or more of the following ``atomic
operations'', all illustrated in Figure~\ref{fig-atomic}:
(i)~flattening triangular pillows to triangular faces;
(ii)~flattening bigonal pillows to bigon faces; and
(iii)~flattening bigon faces to edges.
As before, we remove dangling edges or 2-faces and allow pinched
tetrahedra to fall apart, as seen in Figure~\ref{fig-clean}.
The result is that we can analyse the topological effect of
step~(\ref{en-crush-flatten}) inductively,
simply by studying the possible effect of each individual atomic move.

A final observation is that each tetrahedron of the
original triangulation $\tri$ gives rise to at most one tetrahedron
in the final triangulation after crushing, and so the total number of
tetrahedra does not increase.
More precisely, the tetrahedra of $\tri$ that ``survive'' through to
the final triangulation are precisely those tetrahedra that meet the normal
surface $S$ only in triangles (i.e., no quadrilaterals).
In particular, if $S$ is not vertex linking (i.e., $S$ contains at least
one quadrilateral piece),
then the final triangulation will contain
\emph{strictly fewer tetrahedra} than $\tri$.

%
%

\section{Minimal triangulations of cusped hyperbolic manifolds} \label{s-min}

Here we prove a series of simple combinatorial conditions that must be
satisfied by any minimal ideal triangulation of a \hypmfd:
there can be no internal vertices (Theorem~\ref{t-nointernal}),
no normal 2-spheres (Theorem~\ref{t-nospheres}),
only limited normal tori or Klein bottles (Theorem~\ref{t-notorikb}),
and no low-degree edges (Theorem~\ref{t-lowdeg}).

Orientable variants of the first two results
(no internal vertices or normal 2-spheres)
were proven by Jaco and Rubinstein \cite{jaco03-0-efficiency};
here we extend these to the non-orientable setting.

The fourth result (no low-degree edges) is claimed by Hildebrand and Weeks
but without proof \cite{hildebrand89-cuspedcensusold}.
Although this is easily shown for geometric triangulations (the focus of
the Hildebrand-Weeks census),
significant complications arise in the general case that are not
resolved in the literature.
We give a full proof here.

Recall from Section~\ref{s-prelim} that,
if $\mfdi$ is a {\hypmfd}, then
any properly embedded annulus in the compact manifold $\mfdb$
that is $\pi_1$-injective must be boundary-parallel.
We begin by recasting this fact into a more convenient form.

\begin{observation} \label{obs-am}
    Let $\mfdb$ be a compact 3-manifold with boundary whose interior
    $\mfdi$ is a {\hypmfd}.  Then:
    \begin{itemize}
        \item any properly embedded annulus in $\mfdb$ must be
        two-sided, and must either
        be boundary-parallel or have boundary curves
        that are both trivial in $\partial M$;
        \item any properly embedded {\mb} in $\mfdb$ must be two-sided
        and boundary-parallel.
    \end{itemize}
\end{observation}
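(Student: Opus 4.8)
The plan is to reduce both statements to the fact recalled in Section~\ref{s-prelim-hyp}---that every $\pi_1$-injective properly embedded annulus or torus in $\mfdb$ is boundary-parallel---together with the facts that $\mfdb$ contains no essential sphere, projective plane or compression disc (so $\mfdb$ is irreducible and $\partial\mfdb$ is incompressible) and that $\pi_1\mfdb$ is torsion-free. I will also use the elementary observation that the interior of a twisted $I$-bundle over an annulus, over a \mb{}, or over a Klein bottle is never a {\hypmfd}, since its fundamental group is $\Z$ or virtually $\Z^2$.

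\emph{Annuli.} Let $A$ be a properly embedded annulus. If $A$ is not $\pi_1$-injective, the kernel of $\pi_1 A = \Z \to \pi_1\mfdb$ is nontrivial, so the core of $A$ has finite order and hence---by torsion-freeness---is null-homotopic in $\mfdb$; since both boundary curves of $A$ are isotopic to the core inside $A$, they are null-homotopic in $\mfdb$, hence trivial on $\partial\mfdb$ by incompressibility. A null-homotopic curve is $\Z/2$-null-homologous, so the core of $A$ is not one-sided and $A$ is two-sided, as required. If instead $A$ is $\pi_1$-injective, the recalled fact gives directly that $A$ is boundary-parallel, and a boundary-parallel annulus is two-sided. (If one wishes to apply the recalled fact only to two-sided surfaces, the one-sided $\pi_1$-injective case is disposed of first: the frontier of a regular neighbourhood of $A$ is then a two-sided annulus whose core carries the square of the core of $A$, still nontrivial by torsion-freeness, and so is boundary-parallel; but this makes $\mfdb$ the twisted $I$-bundle over $A$ with a product collar attached along one boundary component, i.e.\ $\mfdb$ itself is that $I$-bundle---whose interior is not hyperbolic.)

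\emph{\mb{}s.} Let $B$ be a properly embedded \mb{} with core $c$, and let $F$ be the component of $\partial\mfdb$ containing $\partial B$. First, $B$ is $\pi_1$-injective: otherwise $c$, and hence $\partial B$ (which represents $c^2$), is null-homotopic in $\mfdb$, so $\partial B$ bounds a disc in $\partial\mfdb$, and $B$ together with that disc is an embedded projective plane---impossible. Second, $B$ is two-sided: otherwise the frontier of a regular neighbourhood of $B$ is an annulus (the connected double cover of $B$) whose core carries $c^2 \neq 1$, hence is a $\pi_1$-injective two-sided annulus, hence boundary-parallel, which as above forces $\mfdb$ to be the twisted $I$-bundle over $B$---not hyperbolic. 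It remains to show $B$ is boundary-parallel. Since $\partial B$ is essential on $F$ (else $c^2 = 1$), its class is that of an essential simple closed curve on $F$ which equals the square $c^2$ in $\pi_1 F$; as an element commuting with a nontrivial peripheral element must lie in the maximal parabolic subgroup containing it, $c$ itself lies (after conjugation) in $\pi_1 F$, so $\pi_1 F$ contains a square root of an essential simple closed curve class. A short computation shows this is impossible when $F$ is a torus, and when $F$ is a Klein bottle it forces $\partial B$ to be the separating curve bounding a \mb{} in $F$, with $c$ one of its (one-sided) square roots. Thus both $c$ and $\partial B$ are peripheral. One then forms the closed Klein bottle $\Sigma$ by gluing $B$ along $\partial B$ to one of the two \mb{}s into which $\partial B$ divides $F$ and pushing the result into the interior, and shows---using that $\pi_1\Sigma$ maps into $\pi_1 F$, together with irreducibility and the absence of projective planes to analyse the position of $\Sigma$---that $\Sigma$ is isotopic to a push-off of $F$; hence $B$ is properly isotopic into $F$.

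I expect this last step---boundary-parallelism of a two-sided \mb{}---to be the main obstacle. The sidedness bookkeeping around $B$, its regular neighbourhood, and the Klein bottles produced by cutting and capping is delicate, and the crux is to rule out $B$ being ``knotted'' relative to a Klein-bottle cusp using only the elementary, peripheral structure of $\pi_1 B$ and $\pi_1 F$ and the recalled facts about annuli and tori, since no further geometric input is available at this stage. Everything else is bookkeeping with fundamental groups and with the no-sphere, no-projective-plane hypotheses.
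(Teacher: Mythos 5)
Your handling of annuli and of one-sided {\mb}s is essentially correct (and broadly parallel to the paper's own reductions), but the two-sided {\mb} case---which is the real content of the second bullet---has a genuine gap. After the group-theoretic reduction (torsion-freeness, malnormality of the cusp subgroup, and the square-root computation in the Klein bottle group) you arrive at: $F$ is a Klein bottle cusp, $\bdry B$ is its separating curve, and the core $c$ is peripheral. At that point you simply assert that the closed Klein bottle $\Sigma = B \cup B'$ (with $B'$ one of the two {\mb}s of $F$, pushed inward) can be shown to be isotopic to a push-off of $F$. No argument is given, and this is precisely where the difficulty lives: nothing you have established shows $\Sigma$ is even incompressible (that would essentially presuppose that $B$ is unknotted relative to the cusp); you must also make the correct choice of $B'$, since with the wrong choice $\Sigma$ bounds a twisted $I$-bundle and is compressible even when $B$ \emph{is} boundary-parallel; and the facts recalled in Section~\ref{s-prelim-hyp} concern properly embedded annuli and tori, not closed Klein bottles, so even granting incompressibility you would still need a Waldhausen-type ``homotopic into the boundary implies isotopic into the boundary'' result, plus an argument converting parallelism of $\Sigma$ into proper parallelism of $B$. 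You flag this yourself as the main obstacle, and it is left unresolved; in addition, your peripheral-subgroup step imports parabolic/malnormality facts about cusps that go beyond the surface facts the paper recalls.

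The paper disposes of this case in a few lines by a route you could adopt instead: since $B$ is a two-sided {\mb}, $\mfdb$ is non-orientable and the preimage of $B$ in the orientable double cover $\tilde{M}$ is a single two-sided annulus $\tilde{B}$. Applying the annulus dichotomy (already proved) inside $\tilde{M}$: either $\tilde{B}$ is boundary-parallel, which descends to boundary-parallelism of $B$ in $\mfdb$; or the curves of $\bdry \tilde{B}$ are trivial in $\bdry \tilde{M}$, in which case $\bdry B$ bounds a disc $D \subset \bdry \mfdb$ and $B \cup D$ is an embedded projective plane, a contradiction. This avoids any analysis of closed Klein bottles and any appeal to the parabolic structure of the cusp group, and is the step your proposal is missing.
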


\begin{proof}
    First we note that any boundary-parallel surface must be two-sided.
    Now suppose that $S \subset \mfdb$ is a properly embedded annulus or {\mb}
    that is not boundary-parallel.

    If $S$ is an annulus, then we know from Section~\ref{s-prelim-hyp} that
    $S$ cannot be $\pi_1$-injective.  Therefore the two curves
    of $\bdry S$ are trivial in $M$, and since $M$ has no essential
    compression discs they must be trivial in $\bdry \mfdb$ as well.
    Moreover, it follows from this
    that $\bdry S$ must be two-sided in $\bdry \mfdb$,
    and thus $S$ is two-sided in $M$.

    If $S$ is a one-sided {\mb},
    let $T$ be the double of $S$ (i.e., the frontier of a
    regular neighbourhood of $S$ in $M$).
    Then $T$ is a two-sided annulus, and
    by the argument above either
    (i)~$T$ is boundary-parallel, or
    (ii)~the curves of $\bdry T$ are trivial in $\bdry \mfdb$.
    Case~(i) implies that $\mfdb$ is a twisted $I$-bundle over the
    {\mb} (i.e., a solid torus), contradicting our assumptions on $\mfdb$.
    In case~(ii), each boundary curve of $\bdry T$ is parallel to
    $\bdry S$ in $\bdry \mfdb$; therefore $\bdry S$ bounds a disc
    $D \subset \bdry M$ and $\mfdb$ contains an embedded projective plane
    $D \cup S$, again a contradiction.

    If $S$ is a two-sided {\mb}, then $M$ must be
    non-orientable.  Here we apply our earlier results
    to the orientable double cover $\tilde{M}$ of $M$,
    where $S$ lifts to a two-sided annulus $\tilde{S}$.
    Either (i)~$\tilde{S}$ is boundary-parallel in $\tilde{M}$, in which
    case $S$ is boundary-parallel in $\mfdb$; or
    (ii)~the curves of $\bdry \tilde{S}$ are trivial in $\bdry \tilde{M}$,
    in which case $\bdry S$ bounds a disc in $\bdry M$ and we obtain an
    embedded projective plane as before.
\end{proof}

For the next result we require the notion of an
\emph{outermost} normal surface:

\begin{defn}
    Let $\tri$ be an ideal 3-manifold triangulation,
    let $v$ be an ideal vertex of $\tri$,
    and $S$ be a normal surface in $\tri$ that is isotopic to the
    boundary of a small regular neighbourhood of $v$.
    We refer to such a surface as
    \emph{boundary-parallel to $v$}.

    We say that $S$ is \emph{outermost} with respect to $v$ if,
    for any normal surface $S'$ that is isotopic to $S$ and
    disjoint from $S$, either $S'$ lies inside the collar
    between $S$ and a small neighbourhood of $v$, or else
    $S'$ is \emph{normally} isotopic to $S$.
\end{defn}

Essentially, being outermost means that the only isotopic normal surfaces
strictly further away from $v$ are ``copies'' of the same
normal surface, formed from the same combination of triangles
and/or quadrilaterals.

The following lemma is our main helper tool for proving properties
of minimal triangulations.  Note that Jaco and Rubinstein use a related
technique in the orientable setting; see in particular
\cite[Theorem~7.4]{jaco03-0-efficiency}.

\begin{lemma} \label{l-crushsame}
    Let $\mfdb$ be a compact 3-manifold with boundary whose interior
    $\mfdi$ is a {\hypmfd}.
    Let $\tri$ be an ideal triangulation of $\mfdi$,
    and let $v$ be an ideal vertex of $\tri$.
    If $S$ is a normal surface in $\tri$ that is boundary-parallel onto $v$
    and outermost with respect to $v$,
    then if we crush $S$ as described in Section~\ref{s-prelim-crushing},
    some component of the resulting triangulation will also be an ideal
    triangulation of $\mfdi$.
\end{lemma}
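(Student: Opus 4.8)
The plan is to analyse the crushing operation on $S$ very carefully, tracking what happens to the ideal vertex $v$ and to the manifold $\mfdi$. First I would observe that, since $S$ is two-sided (it is boundary-parallel to $v$, hence separates, and bounds a collar on one side), crushing is defined. Write $\mfdi = C_v \cup C_{\mathrm{out}}$ where $C_v$ is the collar between $S$ and $v$ and $C_{\mathrm{out}}$ is the (non-compact, since it contains any other ideal vertices and also ``sees'' $v$ only across $S$) closure of the complement. By the topological analysis of step~(\ref{en-crush-decomp}) recalled in Section~\ref{s-prelim-crushing}, cutting along $S$ and coning the two new boundary surfaces to ideal points produces an ideal cell decomposition of the disjoint union of two pieces: one coming from $C_v$ (an ideal ``pillow'' built from $S \times I$ with both ends coned, i.e.\ a cell decomposition of $S \times \R$, topologically uninteresting) and one coming from $C_{\mathrm{out}}$, which is homeomorphic to $\mfdi$ itself (we have merely cut off the collar $C_v$ and reattached an open cone on $S$, recovering the original cusp). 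So after step~(\ref{en-crush-decomp}) one component already is an ideal cell decomposition of $\mfdi$; the entire difficulty is to show step~(\ref{en-crush-flatten}) does not destroy it.

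Next I would use the crushing lemma to replace the simultaneous flattening of all footballs and purses by a sequence of atomic moves (flattening triangular pillows, bigonal pillows, and bigons), and argue that none of these atomic moves changes the homeomorphism type of the $C_{\mathrm{out}}$-component. Here is where the \emph{outermost} hypothesis does its work. If flattening some bigon or pillow in the $C_{\mathrm{out}}$-piece changed its topology, that flattening would have to collapse a nontrivial region — and, using the barrier-surface technology of Section~\ref{s-prelim-barrier} (precisely the annulus/\mb-in-the-2-skeleton barrier whose boundary runs along $F = S$, which the excerpt advertises as appearing in exactly this proof), one traces the offending collapse back to a normal surface $S'$ isotopic to $S$, disjoint from $S$, lying strictly outside the collar, and \emph{not} normally isotopic to $S$. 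That contradicts outermostness. More concretely, a topology-changing flatten in $C_{\mathrm{out}}$ would exhibit an essential annulus or \mb\ (or a compressing disc, or an essential sphere) in $\mfdb$, all of which are forbidden by Section~\ref{s-prelim-hyp} and Observation~\ref{obs-am}; the only surviving possibility is a trivial collapse, which merely simplifies the triangulation without changing the manifold. One must also check the flattens do not introduce invalid edges in this component — again a nontrivial collapse creating an invalid edge would produce a forbidden normal surface or violate outermostness.

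Finally I would assemble the pieces: after all atomic flattens, the $C_{\mathrm{out}}$-component is a valid generalised triangulation with no boundary triangles (crushing never creates boundary), at least one ideal vertex (the coned copy of $S$ survives as an ideal vertex with link homeomorphic to the relevant cusp cross-section, since none of the moves touch it), and homeomorphic to $\mfdi$ — i.e.\ an ideal triangulation of $\mfdi$, as required. The main obstacle, and the technical heart of the argument, is the middle step: ruling out topology-changing atomic flattens in the $C_{\mathrm{out}}$-piece. The subtlety is that the subcomplex along which a bad flatten occurs need not be visibly an embedded annulus or \mb\ in $\tri$ — one has to build it inside the 2-skeleton of the split triangulation $\tri' = \tri | S$ and invoke the combined barrier construction to certify that normalising \emph{within} that region either recovers $S$ (up to normal isotopy, contradicting nontriviality) or produces a new normal surface witnessing a violation of outermostness. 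Getting that barrier argument exactly right, and enumerating the finitely many shapes a football/purse/bigon can take relative to $S$, is where the real work lies.
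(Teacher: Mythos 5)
Your overall framework matches the paper's: decompose the crushing process into step~(\ref{en-crush-decomp}) plus a sequence of atomic moves via the crushing lemma, discard the $S \times (0,1)$ component, and argue inductively that the $\mfdi$-component survives; the barrier technology and the outermost hypothesis are correctly identified as the mechanisms for closing the last gap. But there is a genuine error in the middle of your argument that conflates several distinct cases. You assert that ``a topology-changing flatten in $C_{\mathrm{out}}$ would exhibit an essential annulus or {\mb} (or a compressing disc, or an essential sphere) in $\mfdb$, all of which are forbidden \ldots; the only surviving possibility is a trivial collapse, which merely simplifies the triangulation without changing the manifold.'' That is not what happens. When a bigon's two edges are identified, the bigon can close up into a disc with trivial boundary, or into an annulus whose two boundary curves are trivial and nested in $\bdry\mfdb$: these \emph{are} topology-changing collapses (they pinch a ball down to an edge and split off extra components that must be discarded), yet they are \emph{not} forbidden by hyperbolicity and they do \emph{not} contradict outermostness --- they are simply harmless, because after discarding the split-off ball the surviving component is still an ideal cell decomposition of $\mfdi$. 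Treating every topology-changing flatten as either forbidden or a violation of outermostness would break the induction.

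The cases that actually need the outermost hypothesis are exactly the ones not covered by Observation~\ref{obs-am}: a bigon closing up into a \emph{boundary-parallel} annulus or {\mb}. Those surfaces are perfectly legal in a hyperbolic manifold, so no appeal to Section~\ref{s-prelim-hyp} rules them out. Your proposal gestures at ``tracing the offending collapse back to a normal surface $S'$ \ldots not normally isotopic to $S$'' but never produces this surface, and that construction is the technical heart of the paper's proof: one pulls the surviving bigon back to a \emph{chain of trapezoids} $Z$ in the original $\tri$ with $\bdry Z \subset S$, parallel to an annulus or {\mb} $Z' \subset S$; one forms $S' = (S \setminus Z') \cup Z$ and pushes it off slightly away from $v$ to get $S''$; one then normalises $S''$ behind the barrier $S \cup Z$ to obtain a normal surface $N$ isotopic to $S$, disjoint from $S$, strictly farther from $v$, and (because it avoids $Z$) not normally isotopic to $S$. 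That is the contradiction with outermostness, and it cannot be obtained by a generic appeal to ``essential surfaces are forbidden.'' You also omit the easy but necessary check that a pillow whose two faces are identified cannot form the entire $\mfdi$-component (it would force $\mfdi$ to be $S^3$, $L(3,1)$, $\R P^3$, or $\R P^2 \times (0,1)$, or have invalid edges). So the approach is the right one, but the case analysis you defer to ``the real work'' is where the proof would go wrong if carried out as you have sketched it.
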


\begin{proof}
    Recall from
    Section~\ref{s-prelim-crushing} that, by the crushing lemma,
    the full crushing process can be realised via the following steps:
    \begin{enumerate}
        \item cutting $\tri$ open along $S$ and then collapsing both copies
        of $S$ on the boundary to points,
        which gives a cell decomposition formed from tetrahedra,
        three-sided footballs, 4-sided footballs and/or triangular purses;
        \item performing a sequence of atomic operations, each of
        which either flattens a triangular pillow to a triangle,
        flattens a bigonal pillow to a bigon, or
        flattens a bigon to an edge.
    \end{enumerate}

    After step~(1) (cutting along $S$ and collapsing the boundaries),
    we obtain a cell decomposition with two components:
    one represents the non-compact manifold $\mfdi$, and one represents
    the non-compact product $S \times (0,1)$.
    These are ``ideal cell decompositions'' in the same sense as an
    ideal triangulation---if we remove the vertices whose links are
    non-spheres, then the underlying topological spaces are
    homeomorphic to $\mfdi$ and $S \times (0,1)$ respectively.

    At this stage we throw away the $S \times (0,1)$ component, and
    focus solely on the ideal cell decomposition of $\mfdi$.

    What remains is to inductively show that, if we apply any individual
    atomic operation of step~(2) to an ideal cell decomposition of $\mfdi$,
    we obtain another ideal cell decomposition of $\mfdi$ (possibly
    after throwing away more unwanted components).
    We consider each type of operation in turn; the reader may wish to
    refer back to Figure~\ref{fig-atomic} for illustrations of these
    operations.
    \begin{itemize}
        \item \emph{Flattening a triangular pillow:}

        If the two triangular faces of
        the pillow are not identified in the cell decomposition then
        this operation does not change the topology.

        If the two faces are identified then the pillow
        forms an entire connected component, and therefore
        represents the entire ideal cell decomposition of $\mfdi$.
        Here we obtain a contradiction: by enumerating
        all six ways in which the upper face can be identified to the lower,
        we see that the underlying manifold must instead be $S^3$ or $L(3,1)$,
        or else the cell decomposition must have invalid edges.

        \item \emph{Flattening a bigonal pillow:}

        As before, if the two bigon faces of
        the pillow are not identified in the cell decomposition then
        this operation does not change the topology.

        If the two bigon faces are identified then again the pillow
        must form the entire ideal cell decomposition of $\mfdi$,
        and again we obtain a contradiction.
        There are four ways in which
        the upper face can be identified to the lower,
        and these yield $S^3$, $\R P^3$, $\R P^2 \times (0,1)$, or a
        cell decomposition with invalid edges.

        \item \emph{Flattening a bigon:}

        Once again, if the two edges of the bigon are not identified in
        the cell decomposition then this operation does not change the
        topology.  If the edges are identified, however, then more
        delicate arguments are required.

        \begin{figure}[tb]
        \centering
        \subfigure[Non-orientable]{\label{fig-bigon-mb}%
            \quad\includegraphics[scale=0.7]{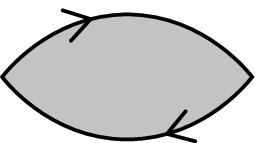}\quad}
        \qquad\qquad
        \subfigure[Orientable]{\label{fig-bigon-annulus}%
            \quad\includegraphics[scale=0.7]{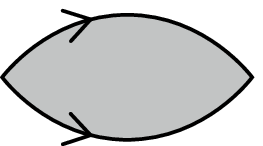}\quad}
        \caption{Identifying the two edges of a bigon}
        \end{figure}

        Suppose the edges are identified so the bigon becomes a
        non-orientable embedded surface, as in Figure~\ref{fig-bigon-mb}.
        In this case, both vertices of the bigon must be
        identified as a single vertex.  If this is an ideal vertex
        of the cell decomposition then by Observation~\ref{obs-am} the bigon
        represents a boundary-parallel {\mb} in $\mfdb$;
        we postpone this case for the moment.
        If this vertex is internal to $\mfdi$ then the bigon becomes an
        embedded projective plane, which is impossible.

        Otherwise the edges must be identified to give an orientable
        embedded surface, as in Figure~\ref{fig-bigon-annulus}.
        Here there are several cases to consider:
        \begin{itemize}
            \item Both vertices of the bigon cannot be internal.
            This is because each bigon was formed in step~(1) above
            by collapsing a copy of the surface $S$ on the boundary
            to a point, and so each bigon meets the
            ideal vertex that represents the
            corresponding boundary component of $\mfdb$.

            \begin{figure}[tb]
            \centering
            \includegraphics[scale=0.9]{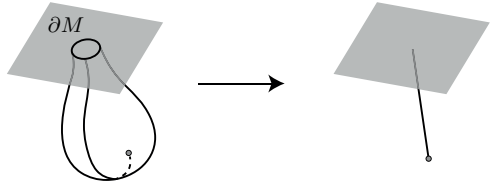}
            \caption{Flattening a bigon that represents a disc in $\mfdb$}
            \label{fig-bigon-flattendisc}
            \end{figure}

            \item If one vertex of the bigon is internal and one is ideal,
            then the bigon represents a properly embedded disc in $\mfdb$.
            Since $\mfdb$ has no essential compression discs or
            essential spheres, the boundary of this disc must be trivial in
            $\bdry \mfdb$, and so the disc and a portion of $\bdry \mfdb$
            must together bound a ball.  Flattening the bigon has
            the topological effect of collapsing this ball to an edge
            as illustrated in Figure~\ref{fig-bigon-flattendisc},
            and the result is a new ideal cell decomposition of $\mfdi$.
            The portion of the original cell decomposition that was
            inside the ball splits off into a separate component
            of the new cell decomposition, and we simply throw this
            component away.

            \begin{figure}[tb]
            \centering
            \includegraphics[scale=0.9]{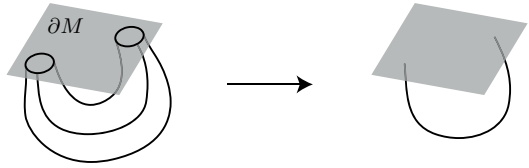}
            \caption{Flattening a bigon that represents an annulus in $\mfdb$}
            \label{fig-bigon-flattenannulus}
            \end{figure}

            \item If both vertices of the bigon are ideal,
            then the bigon represents a properly embedded annulus in $\mfdb$.
            By Observation~\ref{obs-am}, there are two possibilities.
            The annulus might be boundary-parallel in $\mfdb$; we
            postpone this case for the moment.
            Otherwise the annulus is two-sided with trivial boundary
            curves in $\bdry \mfdb$, and again flattening the bigon
            has the effect of collapsing a ball to an edge
            as shown in Figure~\ref{fig-bigon-flattenannulus}.
            As before, we throw away the interior of the ball,
            and what remains is a new ideal cell decomposition of $\mfdi$.
            Note that this argument holds even if the
            two ideal vertices of the bigon are identified.
        \end{itemize}
    \end{itemize}

    The only cases not handled above are those in which a bigon of the
    cell decomposition represents a boundary-parallel annulus or {\mb}
    in $\mfdb$.  To finish we show that, because our original normal
    surface $S$ was \emph{outermost}, such cases can never occur.

    \begin{figure}[tb]
        \centering
        \subfigure[Bigons in $C_0$]{\label{fig-chain-bigons}%
            \quad\includegraphics[scale=0.595]{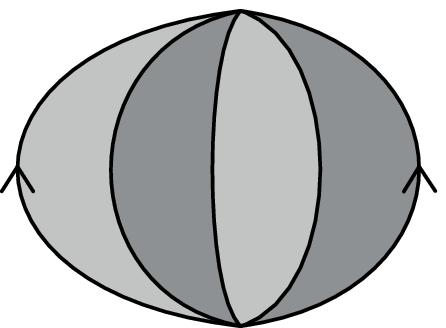}\quad}
        \subfigure[Trapezoids in $\tri$]{\label{fig-chain-trap}%
            \includegraphics[scale=0.85]{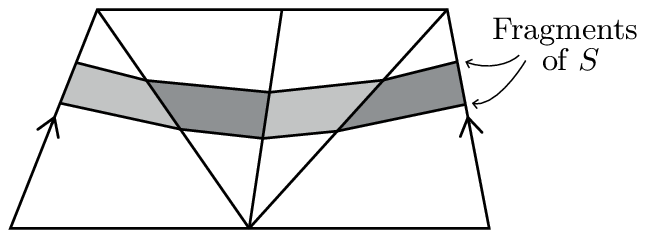}}
        \subfigure[Expanding $S$ to $S''$]{\label{fig-chain-expand}%
            \qquad\includegraphics[scale=0.85]{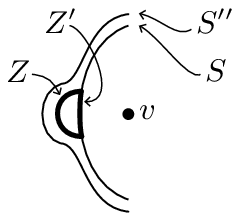}\qquad}
        \caption{Resolving boundary-parallel annuli and {\mb}s}
    \end{figure}

    Let $C_0$ denote the ideal cell decomposition obtained immediately
    after cutting $\tri$ along $S$ and collapsing the boundary
    components (i.e., before any atomic operations are performed).
    Suppose that, at some stage of the crushing process,
    we have a cell decomposition $C_i$ in which some bigon
    represents a boundary-parallel annulus or {\mb}.
    It follows that in $C_0$ there is a \emph{chain} of one or more bigons
    representing this boundary-parallel annulus or {\mb}, as illustrated
    in Figure~\ref{fig-chain-bigons} (so all but one of these bigons
    were flattened between stages $C_0$ and $C_i$).  Choose the smallest such
    chain of bigons, so that the corresponding annulus or {\mb} is
    properly embedded in $\mfdb$.

    In the original triangulation $\tri$, this chain of bigons
    corresponds to a chain of \emph{trapezoids}, as shown in
    Figure~\ref{fig-chain-trap}: each trapezoid lives within a face of
    $\tri$, and is bounded by two edge fragments of $\tri$ and two
    fragments of the normal surface $S$.  The union of these trapezoids
    forms an embedded annulus or {\mb} $Z \subset \tri$ whose boundary
    $\bdry Z$ runs along $S$, and which is parallel into some
    embedded annulus or {\mb} $Z' \subset S$.

    Let $S' = (S \backslash Z') \cup Z$; that is,
    we replace the annulus or {\mb} from within $S$ with the
    parallel union of trapezoids.  Note that $S'$ is isotopic to $S$.
    Let $S''$ be an embedded surface parallel to $S'$ but slightly
    further away from the ideal vertex $v$, as illustrated in
    Figure~\ref{fig-chain-expand}.
    Then $S''$ is also isotopic to $S$,
    disjoint from $S$, and disjoint from the union of trapezoids $Z$.
    Moreover, both $S$ and $S'$ lie inside the collar between $S''$
    and a small neighbourhood of $v$.

    We now normalise this surface $S''$; let $N$ denote the
    resulting normal surface in $\tri$.
    Since $S''$ is boundary-parallel and $\mfdb$ has no essential
    compression discs, the normalised surface $N$ is again
    isotopic to $S$.  Since $S$ and $Z$ together form a
    barrier to normalisation (Section~\ref{s-prelim-barrier}),
    it follows that $N$ is disjoint from both $S$ and $Z$,
    and that $S$ still lies inside the collar between $N$ and a small
    neighbourhood of $v$ (i.e., the normalisation process does not
    ``cross'' through $S$).
    Finally, because $N$ is disjoint from the union of trapezoids $Z$,
    it must be a \emph{different} normal surface; i.e., $N$ is not
    \emph{normally} isotopic to the original surface $S$.
    This contradicts our assumption that $S$ was outermost, and the
    proof is complete.
\end{proof}

\begin{corollary} \label{c-noparallel}
    Let $\mfdb$ be a compact 3-manifold with boundary whose interior
    $\mfdi$ is a {\hypmfd}, and let $\tri$ be a minimal ideal
    triangulation of $\mfdi$.
    Then the only boundary-parallel normal surfaces in $\tri$ are
    vertex linking (i.e., they consist only of triangles).
\end{corollary}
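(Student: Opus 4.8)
The plan is to argue by contradiction from Lemma~\ref{l-crushsame}. Suppose $\tri$ is a minimal ideal triangulation of $\mfdi$, yet some boundary-parallel normal surface $S$ is not vertex linking; say $S$ is boundary-parallel onto the ideal vertex $v$. The strategy is to replace $S$ by a boundary-parallel normal surface onto $v$ that is both non-vertex-linking and \emph{outermost} with respect to $v$, then crush it via Lemma~\ref{l-crushsame} to produce an ideal triangulation of $\mfdi$ with strictly fewer tetrahedra, contradicting minimality.

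First I would record two elementary facts. (a)~A boundary-parallel normal surface onto $v$ is vertex linking if and only if it is normally isotopic to the link $L_v$: a connected normal surface built only from triangles is the link of some vertex of $\tri$, and distinct vertex links are never isotopic (in the irreducible manifold $\mfdi$ two isotopic spheres, or a sphere and a torus/Klein bottle, cannot cobound what they would have to). (b)~The link $L_v$ is the \emph{innermost} boundary-parallel normal surface onto $v$: any such surface can be isotoped to lie outside a small regular neighbourhood of $v$, so (up to isotopy) $L_v$ lies in its $v$-side collar. Combining (a) and (b): if $T$ is boundary-parallel onto $v$ and not vertex linking, and $T'$ is boundary-parallel onto $v$, disjoint from $T$, and lies on the side of $T$ away from $v$, then $T'$ is again not vertex linking --- for otherwise $T' \simeq L_v$ would be innermost yet sit strictly outside $T$, which forces $T$ itself to be normally isotopic to $L_v$, a contradiction.

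Next I would produce an outermost boundary-parallel normal surface $S^*$ onto $v$ with $S$ lying inside $S^*$ (or normally isotopic to it). Starting from $S$: whenever a boundary-parallel normal surface onto $v$ fails to be outermost, the definition supplies a strictly-further-out, non-normally-isotopic, disjoint copy, and iterating gives a nested chain $S = S_1 \prec S_2 \prec \cdots$ of pairwise disjoint, pairwise non-normally-isotopic, boundary-parallel normal surfaces onto $v$. The one genuinely delicate point is that this chain must terminate; I would handle it as a finiteness statement for normal surfaces --- consecutive terms $S_i,S_{i+1}$ cobound a product region $\cong L_v\times[0,1]$ whose cell structure induced by $\tri$ must be non-trivial (else $S_i$ and $S_{i+1}$ would be normally isotopic), and only boundedly many such disjoint non-trivial product regions fit inside the finitely many tetrahedra of $\tri$; alternatively one may quote the standard fact that any family of pairwise disjoint, pairwise non-normally-isotopic normal surfaces in a fixed triangulation is finite. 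Writing $S^*$ for the final (outermost) surface $S_k$, the combination of (a) and (b) shows $S^*$ is not vertex linking, since it lies outside the non-vertex-linking surface $S=S_1$.

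Finally, $S^*$ is boundary-parallel onto $v$ and outermost with respect to $v$, so Lemma~\ref{l-crushsame} applies: crushing $S^*$ yields a triangulation some component $\tri'$ of which is an ideal triangulation of $\mfdi$. Because $S^*$ contains at least one quadrilateral, the closing remark of Section~\ref{s-prelim-crushing} guarantees that the crushed triangulation --- and hence $\tri'$, which uses at most as many tetrahedra --- has strictly fewer tetrahedra than $\tri$. Since $\mfdi$ is non-empty, $\tri'$ is a genuine ideal triangulation of $\mfdi$ with fewer than $n$ tetrahedra, contradicting the minimality of $\tri$. I expect termination of the nesting chain (equivalently, existence of the outermost surface $S^*$) to be the only step requiring real care; the rest is bookkeeping around Lemma~\ref{l-crushsame} and the tetrahedron count for crushing.
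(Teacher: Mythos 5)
Your proposal is correct and follows essentially the same route as the paper's own proof: pass from $S$ to an outermost boundary-parallel normal surface via a Kneser-type finiteness argument, note it must remain non-vertex-linking, and crush it using Lemma~\ref{l-crushsame} to contradict minimality. Your treatment of the termination of the nesting chain and of why the outermost surface cannot be vertex linking is just a more explicit spelling-out of the same steps the paper handles in one or two sentences.
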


\begin{proof}
    Suppose $\tri$ has some non-vertex-linking, boundary-parallel
    normal surface $S$.
    Without loss of generality we may assume that $S$ is
    connected, i.e., parallel onto a single boundary component of $M$;
    let $v$ be the corresponding ideal vertex of $\tri$.

    There must exist a normal surface $S'$ that is isotopic to $S$
    and outermost with respect to $v$, since
    by a standard Kneser-type finiteness argument \cite{kneser29-normal}
    there cannot be infinitely many disjoint normal surfaces
    in $\tri$ with no pair normally isotopic.
    Moreover, this outermost surface $S'$ must also be non-vertex-linking,
    since the original non-vertex-linking surface $S$ cannot be placed inside
    the collar between the vertex link of $v$ and a small neighbourhood of $v$.

    It follows from Lemma~\ref{l-crushsame} that crushing
    $S'$ gives a new ideal triangulation $\tri'$ of $\mfdi$.
    Because $S'$ is non-vertex linking, $\tri'$ must contain
    strictly fewer tetrahedra than $\tri$, contradicting the
    minimality of $\tri$.
\end{proof}

We can now use the results above to prove some combinatorial properties
of minimal triangulations of hyperbolic manifolds.

\begin{theorem} \label{t-nointernal}
    Let $\tri$ be a minimal ideal triangulation of a {\hypmfd}.
    Then $\tri$ contains no internal vertices.
\end{theorem}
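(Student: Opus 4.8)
The plan is to argue by contradiction: assuming $\tri$ has an internal vertex $w$, I will build from $w$ a boundary-parallel normal surface that is not vertex linking, contradicting Corollary~\ref{c-noparallel}.

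First I would find a suitable edge. Since $\tri$ is an ideal triangulation it has at least one ideal vertex $v$, and since the $1$-skeleton of a connected triangulation is connected, some edge of $\tri$ must have one endpoint the ideal vertex $v$ and the other the internal vertex $w$; fix such an edge $e$ (in particular $e$ is not a loop). Let $S$ be the frontier in $\mfdi$ of a small regular neighbourhood of the closed edge $e$ (with its endpoints $v$ and $w$). Because $e$ joins the ideal vertex $v$ to the interior point $w$, this neighbourhood is an open cusp neighbourhood of $v$ with a handle attached ending in a small ball near $w$; it is therefore homeomorphic to a cusp neighbourhood of $v$, so $S$ is a single torus or Klein bottle isotopic to the vertex link $L_v$ of $v$. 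Thus $S$ is boundary-parallel onto $v$ in the sense of the definition preceding Lemma~\ref{l-crushsame}, although $S$ itself is not normal.

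Now normalise $S$ to a normal surface $N$. Since $\mfdb$ has no essential compression discs, $S$ is incompressible, so the normalisation performs only isotopies and $N$ is a nonempty normal surface still isotopic to $L_v$, hence boundary-parallel. Moreover, the closed edge $e$ is a subcomplex of $\tri$, hence a barrier to normalisation (Section~\ref{s-prelim-barrier}); since $S$ lies in the component of $\mfdi$ minus the neighbourhood of $e$ that misses $e$, the normalisation stays in that component and $N$ is disjoint from $e$. By Corollary~\ref{c-noparallel} the boundary-parallel surface $N$ must be vertex linking, and the only vertex-linking surface isotopic to $L_v$ is (up to normal isotopy) $L_v$ itself --- distinct cusps have non-isotopic links, and links of internal vertices are spheres --- so $N = L_v$ up to normal isotopy. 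But $L_v$ meets $e$: its normal triangle in a tetrahedron incident to the $v$-corner of $e$ crosses $e$, and this intersection number is a normal-isotopy invariant, contradicting $N \cap e = \emptyset$. Hence $\tri$ has no internal vertex.

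The main obstacle, I expect, lies in the geometric bookkeeping around the auxiliary surface $S$: confirming that it is genuinely isotopic to $L_v$ (so that Corollary~\ref{c-noparallel} applies), that the normalisation does not collapse it away or compress it, and --- most delicately --- that the barrier-surface machinery for the subcomplex $e$ really does force $N$ to avoid $e$.
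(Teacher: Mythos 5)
Your proposal is correct and follows essentially the same route as the paper: find an edge $e$ from an internal vertex to an ideal vertex $v$, take the boundary $S$ of a regular neighbourhood of $e$, normalise it using $e$ as a barrier, and invoke Corollary~\ref{c-noparallel} to derive a contradiction. The only nitpick is a small sloppiness at the start: connectivity of the 1-skeleton gives you \emph{some} edge with one internal and one ideal endpoint, not necessarily one joining the particular $w$ and $v$ you fixed beforehand, but this is cosmetic and does not affect the argument.
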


\begin{proof}
    Let $\mfdb$ denote the corresponding compact 3-manifold with boundary.
    If $\tri$ has an internal vertex, then $\tri$ has an edge $e$ joining
    some internal vertex $u$ to some ideal vertex $v$.
    Let $S$ be an embedded surface that bounds
    a small regular neighbourhood of $e$.
    Then $S$ is boundary-parallel onto $v$.

    We now normalise $S$; let $N$ denote the corresponding normal
    surface in $\tri$.  Since $S$ is boundary-parallel and $\mfdb$ has no
    essential compression discs, $N$ must be isotopic to $S$ and therefore
    also boundary-parallel onto $v$.  Since the edge $e$ acts as a barrier to
    normalisation (Section~\ref{s-prelim-barrier}), $S$ cannot normalise to
    the vertex link of $v$, and so $N$ is a non-vertex-linking
    boundary-parallel normal surface in contradiction to
    Corollary~\ref{c-noparallel}.
\end{proof}

\begin{theorem} \label{t-nospheres}
    Let $\tri$ be a minimal ideal triangulation of a {\hypmfd}.
    Then $\tri$ contains no normal 2-spheres.
\end{theorem}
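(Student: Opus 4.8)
The plan is to follow the pattern of Theorem~\ref{t-nointernal}: assuming $\tri$ contains a normal $2$-sphere $S$, I will manufacture a boundary-parallel normal surface that is not vertex linking, contradicting Corollary~\ref{c-noparallel}. Let $\mfdb$ be the compact manifold with interior $\mfdi$, and suppose for contradiction that $\tri$ contains a normal $2$-sphere $S$. Since $\mfdi$ is hyperbolic, $\mfdb$ contains no essential spheres, so $S$ bounds a ball in $\mfdi$. Also, since a closed normal surface built only from quadrilaterals has zero Euler characteristic, the sphere $S$ must contain at least one normal triangle; this triangle cuts off a vertex $v$ of the ambient tetrahedron, which by Theorem~\ref{t-nointernal} represents an ideal vertex of $\tri$, and the triangle meets the three edges of $\tri$ incident to $v$. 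Fix one such edge $e$, and let $\mathcal{K}\subset e$ be the fragment of $e$ running from $v$ to its intersection point with $S$ nearest $v$; thus $\mathcal{K}$ meets $S$ in exactly one point, and is a single edge fragment of $\tri$ cut by $S$.

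Now I construct a test surface. Let $W$ be obtained from the link of $v$ by tubing it, along $\mathcal{K}$, to a small $2$-sphere parallel to $S$; equivalently, $W$ is the boundary component incident to $v$ of a small regular neighbourhood of $S\cup\mathcal{K}$, and topologically $W$ is the connected sum of the link of $v$ with a sphere parallel to $S$. Because $S$ bounds a ball, that sphere summand is trivial, so $W$ is isotopic to the link of $v$; in particular $W$ is boundary-parallel onto $v$. The union $S\cup\mathcal{K}$ is a barrier to normalisation---precisely the combined barrier of Section~\ref{s-prelim-barrier}, with normal surface $S$ and subcomplex $\mathcal{K}$. Normalise $W$ within the component $C$ of the complement of this barrier that misses $S\cup\mathcal{K}$, and let $N$ be the resulting normal surface. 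Since $W$ is boundary-parallel and $\mfdb$ has no essential compression discs, $N$ is isotopic to $W$, and hence $N$ is again a connected surface that is boundary-parallel onto $v$.

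It remains to contradict Corollary~\ref{c-noparallel}. By the barrier property $N$ lies in $C$, so $N$ is disjoint from $\mathcal{K}$; but the link of $v$---and hence any connected vertex-linking normal surface isotopic to it---meets every edge of $\tri$ incident to $v$, and in particular meets $\mathcal{K}$. Therefore $N$ is not the link of $v$, and being connected and boundary-parallel onto $v$ it is not any vertex link at all. Thus $N$ is a non-vertex-linking boundary-parallel normal surface, contradicting Corollary~\ref{c-noparallel}; so $\tri$ contains no normal $2$-sphere.

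The crux is the middle step. Topologically $W$ is nothing but a copy of a vertex link, so a priori its normalisation could collapse straight back onto that link; the purpose of threading the edge fragment $\mathcal{K}$ through $S$ is to create a barrier that keeps $W$ boundary-parallel---which is exactly where $S$ bounding a ball is used---while simultaneously forcing the normalisation to stay clear of the edges at $v$. Making precise that the combined barrier $S\cup\mathcal{K}$ does confine the normalisation in this way, and checking the few degenerate configurations that can arise, is the delicate part; the non-orientable (Klein-bottle cusp) case needs no separate treatment, since orientability is never used.
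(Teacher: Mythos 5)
Your argument is essentially the paper's own proof: you take the boundary surface obtained from a parallel copy of $S$ (which bounds a ball) tubed along an edge segment $\mathcal{K}$ to the link of an ideal vertex $v$, use $S\cup\mathcal{K}$ as a combined barrier in the sense of Section~\ref{s-prelim-barrier}, and conclude that its normalisation is a non-vertex-linking surface that is boundary-parallel onto $v$, contradicting Corollary~\ref{c-noparallel} --- this is exactly the construction in the paper, down to the choice of barrier. One caveat: your justification that $S$ must contain a triangle, namely that ``a closed normal surface built only from quadrilaterals has zero Euler characteristic,'' is not a correct general fact --- for a quad-only closed normal surface one computes $\chi = \sum_p \bigl(1 - d_p/4\bigr)$, summing over the points $p$ where the surface meets edges of $\tri$ with $d_p$ the edge degree, and this can be positive when corners lie on low-degree edges. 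Fortunately the triangle is unnecessary: every normal disc of $S$ has corners on edges of $\tri$, and by Theorem~\ref{t-nointernal} every vertex of $\tri$ is ideal, so you may simply take $e$ to be any edge meeting $S$ and $v$ either of its endpoints, which is precisely what the paper does.
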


\begin{proof}
    This is essentially a variant of the proof of Theorem~\ref{t-nointernal}.
    Again let $\mfdb$ denote the corresponding compact 3-manifold with boundary.

    If $S$ is a normal 2-sphere in $\tri$, then $S$ must bound a ball $B$.
    Moreover, since every vertex of $\tri$ is ideal
    (Theorem~\ref{t-nointernal}), every vertex of $\tri$
    lies outside this ball.

    Let $e$ be any edge of $\tri$ that meets $S$, let $v$ be the ideal
    vertex at some end of $e$, and let $\alpha \subset e$
    denote the segment of $e$ that runs from $v$ along
    to the first point at which $e$ meets $S$.

    \begin{figure}[tb]
        \centering
        \includegraphics[scale=0.9]{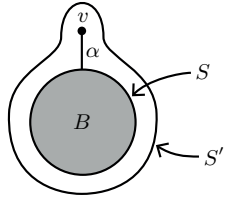}
        \caption{Extending a normal 2-sphere to a boundary-parallel surface}
        \label{fig-nospheres}
    \end{figure}

    Let $S'$ denote the boundary of a small regular neighbourhood of
    $B \cup \alpha$ in $\tri$, as illustrated in Figure~\ref{fig-nospheres}.
    Then $S'$ is boundary-parallel onto $v$.

    As before, we normalise $S'$; let $N$ denote the corresponding normal
    surface in $\tri$.  Again the normalisation $N$ must also be
    boundary-parallel onto $v$.  This time the normal surface $S$ and
    the arc $\alpha$ together act as a barrier to normalisation,
    and so once again $S'$ cannot normalise to the vertex link of $v$.
    Therefore $N$ is a non-vertex-linking
    boundary-parallel normal surface, in contradiction to
    Corollary~\ref{c-noparallel}.
\end{proof}

\begin{theorem} \label{t-notorikb}
    Let $\tri$ be a minimal ideal triangulation of a {\hypmfd} $\mfdi$.
    Then any non-vertex-linking normal torus or Klein bottle in $\tri$
    must bound a solid torus ($B^2 \times S^1$)
    or solid Klein bottle ($B^2 \twisted S^1$) respectively.
\end{theorem}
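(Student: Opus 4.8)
The plan is to mimic the strategy used for Theorems~\ref{t-nointernal} and~\ref{t-nospheres}: take a normal torus or Klein bottle $T$ in $\tri$ that does \emph{not} bound a solid torus or solid Klein bottle, and derive a contradiction with minimality via Corollary~\ref{c-noparallel}. Let $\mfdb$ be the corresponding compact manifold. Since $\mfdi$ is hyperbolic, $\mfdb$ contains no essential tori, no essential annuli, no essential spheres, no projective planes, and every $\pi_1$-injective torus or annulus is boundary-parallel. So $T$ is either compressible or boundary-parallel. If $T$ is boundary-parallel, then it is itself a non-vertex-linking boundary-parallel normal surface, directly contradicting Corollary~\ref{c-noparallel}; so we may assume $T$ is compressible. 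Compress $T$ along an essential disc: in the two-sided case this yields a sphere (from a torus) or (from a Klein bottle) either a sphere or — if $T$ was one-sided — a reduction handled via the orientable double cover as in Observation~\ref{obs-am}. A sphere obtained this way bounds a ball by Theorem~\ref{t-nospheres} together with irreducibility of $\mfdb$ (no essential spheres). Undoing the compression, $T$ therefore bounds on at least one side a region obtained from a ball by attaching a 1-handle, i.e.\ a solid torus or solid Klein bottle, which is exactly the conclusion. One must also rule out the "wrong side'' possibility, i.e.\ that neither side is a solid torus/Klein bottle; but a torus in an irreducible atoroidal manifold with torus/Klein-bottle boundary that is compressible on one side must bound a solid torus on the compressing side (the compressing body is a ball-with-handle since any sphere it contains is trivial), which suffices.

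I would organise the argument as follows. First, reduce to the case $T$ compressible, dispatching the boundary-parallel case immediately via Corollary~\ref{c-noparallel}. Second, handle one-sided $T$: a one-sided Klein bottle has a regular-neighbourhood frontier that is a two-sided torus $\hat T$, which is normal (or normalises without crossing a barrier built from $T$), and $\hat T$ bounds a twisted $I$-bundle over the Klein bottle on one side; applying the argument to $\hat T$ and tracking the bundle structure forces the required solid-Klein-bottle conclusion, while a one-sided torus cannot occur (its neighbourhood frontier would be a torus double, forcing non-orientability with an orientation-reversing loop dual to $T$, but then $T$ compresses to a sphere inside an $I$-bundle, pinning down the topology). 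Third, for two-sided compressible $T$: let $D$ be a compressing disc on side $W$; compressing $T$ along $D$ gives a sphere $\Sigma \subset W$, which bounds a ball $B$ by irreducibility; then $W$ is recovered from $B$ by attaching a 1-handle along $\bdry D \subset \bdry B$, so $W$ is a solid torus or solid Klein bottle depending on whether the handle is attached orientably, and $T = \bdry W$ bounds it. I would note that normality of $T$ is only used to invoke Corollary~\ref{c-noparallel} in the boundary-parallel case; the rest of the argument is purely topological and applies to $T$ as an embedded surface.

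The main obstacle I anticipate is the bookkeeping in the non-orientable and one-sided cases, where "bounds a solid torus or solid Klein bottle'' must be stated carefully: a one-sided Klein bottle $K$ doesn't separate, so the statement should be read as "the frontier of a regular neighbourhood of $K$ bounds a twisted $I$-bundle over $K$'', and one must check this twisted $I$-bundle is the solid Klein bottle $B^2 \twisted S^1$ as the theorem asserts. Relatedly, when $T$ is a two-sided Klein bottle in a non-orientable $\mfdb$, compressing might a priori yield a projective plane rather than a sphere; but $\mfdb$ contains no projective planes, so either such a compressing disc does not exist (and then $T$ is $\pi_1$-injective, hence boundary-parallel by the hyperbolicity hypothesis, reducing to Corollary~\ref{c-noparallel}) or the compression yields a sphere and we proceed as above. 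A secondary nuisance is verifying that the sphere $\Sigma$ produced by compression is genuinely a normal surface (or can be normalised without crossing a suitable barrier surface) so that Theorem~\ref{t-nospheres} applies; alternatively one simply invokes irreducibility of $\mfdb$, which follows from "no essential spheres'' in Section~\ref{s-prelim-hyp} and needs no normalisation at all — this is the cleaner route, and I would take it.
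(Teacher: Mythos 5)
Your skeleton (dispatch boundary-parallel via Corollary~\ref{c-noparallel}, compress to a sphere, invoke sphere-triviality) matches the paper's proof, but your preferred route has a genuine gap exactly at the ``wrong side'' step, and the fix you set aside at the end is in fact the essential ingredient.

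When you compress a two-sided normal torus $T$ along a disc $D\subset W$, the resulting sphere $\Sigma$ bounds a ball $B$ by irreducibility, but you then use $B$ as if it lies on the far side of $\Sigma$ (away from $T\cup D$), so that $W=B\cup\text{1-handle}$ is a solid torus. The ball may equally lie on the near side, containing $T\cup D$, in which case $T$ sits inside a ball and need not bound a solid torus on either side. Your parenthetical ``a torus in an irreducible atoroidal manifold with torus/Klein-bottle boundary that is compressible on one side must bound a solid torus on the compressing side'' is false as a general topological statement: embed a nontrivial knot complement $X$ in a small ball inside $\mfdb$ with $T=\bdry X$ and $X$ on the compact side; $T$ is compressible from the other side (a meridian disc of the complementary solid torus, isotoped off the cusp region), yet neither side of $T$ is a solid torus. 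The paper rules out this configuration not by pure topology but by normalisation: it observes that in the bad case $\Sigma$ is essential in the component of $\mfdi\setminus T$ containing it (its far side is not a ball, since $\mfdi\neq S^3$), and since $T$ is \emph{normal} it serves as a barrier (Section~\ref{s-prelim-barrier}), so $\Sigma$ normalises to a non-empty normal sphere inside that component, contradicting Theorem~\ref{t-nospheres}. This is precisely the route you call ``a secondary nuisance'' and discard in favour of bare irreducibility; but bare irreducibility does not control which side of $\Sigma$ the ball lies on, so it cannot close the gap. (For the two-sided Klein bottle the paper avoids normalisation by noting that a ball cannot contain an embedded Klein bottle, so the near-side case is excluded outright; that shortcut is not available for the torus.)

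The one-sided case is also handled differently. The paper shows one-sided normal tori and Klein bottles simply cannot occur in a minimal triangulation: the normal double $2S$ is a two-sided normal torus or Klein bottle, so by the two-sided case it must bound a solid torus or solid Klein bottle; but $2S$ has the cusps on one side and a twisted $I$-bundle over $S$ on the other, and neither is such a handlebody, a contradiction. Your sketch aims instead at extracting ``the required solid-Klein-bottle conclusion'' from the frontier torus $\hat T$, which mismatches what has to be shown: a one-sided surface does not bound, and the twisted $I$-bundle over the Klein bottle with torus boundary is not $B^2\twisted S^1$. The intended outcome there is a contradiction, not an identification of a bounding region.
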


\begin{proof}
    Let $S$ be a non-vertex-linking normal torus or Klein bottle in $\tri$.
    We take cases according to whether $S$ is a torus or Klein bottle,
    and whether it is two-sided or one-sided.

    Suppose $S$ is a two-sided torus.
    By Corollary~\ref{c-noparallel}, $S$ is not boundary-parallel;
    since $\mfdi$ contains no essential tori, it follows that $S$ is not
    $\pi_1$-injective (see Section~\ref{s-prelim-hyp}).
    Therefore $S$ has a compression disc $D$;
    that is, an embedded disc $D \subset \mfdi$ for which
    $D \cap S = \bdry D$ and $\bdry D$ is a non-trivial curve in $S$.

    \begin{figure}[tb]
        \centering
        \includegraphics[scale=0.9]{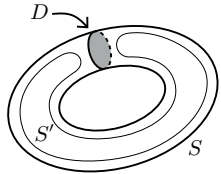}
        \caption{A compression disc for the torus $S$ and the sphere $S'$}
        \label{fig-torusdisc}
    \end{figure}

    Let $S'$ denote the sphere on the boundary of a regular
    neighbourhood of $S \cup D$,
    as illustrated in Figure~\ref{fig-torusdisc}.  This sphere $S'$ must
    bound a ball in $\mfdi$.  If $S'$ bounds a ball away from $S \cup D$,
    then the normal torus $S$ bounds a solid torus.  Otherwise we can
    normalise $S'$ to a \emph{normal} sphere in $\mfdi$, since
    the normal torus $S$ acts a barrier to normalisation,
    and since the sphere $S'$ is essential in $\mfdi \backslash S$.
    This contradicts Theorem~\ref{t-nospheres}.

    Next, suppose that $S$ is a two-sided Klein bottle.
    Again Corollary~\ref{c-noparallel} shows
    that $S$ cannot be boundary-parallel.
    It follows that, in the orientable double cover of $\mfdi$,
    $S$ lifts to a two-sided torus that cannot be $\pi_1$-injective, and
    so $S$ is not $\pi_1$-injective in $\mfdi$.  We therefore obtain a
    compression disc $D \subset \mfdi$ for the two-sided Klein bottle $S$
    as before.

    As in the previous case, let $S'$ denote the sphere on
    the boundary of a regular neighbourhood of $S \cup D$.
    Since a ball cannot contain an embedded Klein bottle,
    $S'$ must bound a ball away from $S \cup D$, and therefore the normal Klein
    bottle $S$ bounds a solid Klein bottle.

    Finally, suppose that $S$ is one-sided.  Let $2S$ denote the double of $S$.
    Then $2S$ is a non-vertex-linking two-sided normal torus or Klein bottle,
    and by the arguments above $2S$ bounds a solid torus or Klein bottle
    in $\mfdi$.  This is impossible, since $2S$ has the ideal
    vertices of $\tri$ on one side and a twisted $I$-bundle over the torus or
    Klein bottle on the other.
\end{proof}

\begin{theorem} \label{t-lowdeg}
    Let $\tri$ be a minimal ideal triangulation of a {\hypmfd}.
    Then $\tri$ has no edges of degree~1 or~2,
    and $\tri$ has no edges of degree~3 that are contained in
    three distinct tetrahedra.
\end{theorem}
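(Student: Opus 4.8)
The plan is to handle each forbidden configuration by exhibiting a normal surface that contradicts one of the earlier results---Theorem~\ref{t-nospheres} (no normal 2-spheres), Theorem~\ref{t-notorikb} (normal tori/Klein bottles bound solid tori/Klein bottles), or, where the edge moves are available, minimality directly via a Pachner move. First I would dispose of the easy cases using Pachner moves: if $\tri$ has an edge of degree~1, the single tetrahedron around it has two of its own faces folded together, and collapsing this configuration yields a triangulation of $\mfdi$ with one fewer tetrahedron (or exhibits $\mfdi$ as a manifold too small to be hyperbolic), contradicting minimality. An edge of degree~2 bounds a bigonal region; the two tetrahedra meeting it can be replaced (a ``2-0 move'' along the bigon, or equivalently noticing a bigonal pillow) by a triangulation with fewer tetrahedra, again contradicting minimality---one must check the degenerate sub-cases where the two tetrahedra coincide or the bigon's faces are identified, and in those sub-cases one instead reads off that the manifold is $S^3$, a lens space, or has invalid edges, none of which is a $\hypmfd$. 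Similarly, an edge of degree~3 lying in three \emph{distinct} tetrahedra admits a 3-2 Pachner move, producing a triangulation of $\mfdi$ with one fewer tetrahedron; the hypothesis that the three tetrahedra are distinct is exactly what makes the 3-2 move legal, so this case is immediate from minimality.

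The more delicate point is to verify that each of these local moves is genuinely available, i.e.\ that the combinatorial obstructions to a 2-0 or 3-2 move (faces or tetrahedra being identified in unexpected ways) either do not arise or, when they do, force the underlying manifold to be non-hyperbolic. For the degree-2 case I would enumerate the ways the bigonal pillow around the edge can close up---this is a short finite check, analogous to the football/pillow case analysis in the proof of Lemma~\ref{l-crushsame}---and in each bad sub-case identify a normal 2-sphere (contradicting Theorem~\ref{t-nospheres}) or recognise the manifold directly. The cleanest uniform approach, and the one I would lean on, is: the boundary of a small regular neighbourhood of the low-degree edge $e$ is an embedded sphere $S'$ in $\mfdi$; normalise it, using $e$ itself as a barrier to normalisation (Section~\ref{s-prelim-barrier}) so that $S'$ cannot collapse to a vertex link and survives as a genuine normal sphere---unless $S'$ was already inessential, in which case the inessentiality pins down the local picture tightly enough to run the Pachner move. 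This mirrors exactly the strategy of Theorems~\ref{t-nointernal} and~\ref{t-nospheres}.

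The main obstacle I anticipate is the degree-3-in-three-distinct-tetrahedra case interacting with the possibility that, after the 3-2 move, the resulting complex is no longer a valid ideal triangulation of $\mfdi$---for instance if the new edge created by the move is identified with an existing edge in a way that produces an invalid edge, or if two of the three replaced tetrahedra share more than the one common face so that the ``move'' actually collapses rather than flips. Resolving this requires carefully tracking the face identifications among the three tetrahedra surrounding $e$ and showing that any pathological identification again forces a normal 2-sphere or a small non-hyperbolic manifold. I expect this bookkeeping---rather than any deep idea---to be where the real work lies, and it is presumably the ``significant complications in the general case'' alluded to in the section introduction that the Hildebrand--Weeks treatment omitted. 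With that case analysis complete, the theorem follows by combining the four sub-results.
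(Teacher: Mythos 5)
Your degree-3 case matches the paper: a 3-2 Pachner move on a degree-3 edge surrounded by three distinct tetrahedra is always available, so minimality is contradicted immediately, and the obstructions you worry about do not arise under the distinctness hypothesis.

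The degree-1 and degree-2 cases are where your proposal has a real gap. Your plan---perform a 2-0-type collapse and inspect the ``degenerate sub-cases''---is precisely the argument that the section's introduction flags as easy only for geometric triangulations: the degenerate sub-cases \emph{are} the unresolved complications, and dispatching them is the work of the theorem, not a remark. The paper avoids the enumeration entirely. For a degree-1 edge $e$, folding two faces of one tetrahedron produces a \emph{loop} edge $g$ at a single ideal vertex $v$; the surface boundary-parallel onto $v$ that encloses $g$ normalises (with $g$ as a barrier) to a non-vertex-linking boundary-parallel normal surface, contradicting Corollary~\ref{c-noparallel}. For a degree-2 edge no such loop exists, so the paper first subdivides $e$ with a new \emph{internal} vertex $u$, turning two tetrahedra into four and producing an edge $g$ from $u$ to an ideal vertex $v$; the boundary-parallel surface enclosing $g$ is already normal with a quadrilateral in each of the four new tetrahedra, and Lemma~\ref{l-crushsame} (applied to an outermost surface beyond it) crushes to an ideal triangulation of the same manifold with at most $n-2$ tetrahedra, contradicting minimality. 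Your fallback---normalise $\partial N(e)$ with $e$ as a barrier, hoping for a normal sphere---does not rescue the argument: since $e$ runs between two ideal vertices, $\partial N(e)$ is not boundary-parallel onto any single cusp, so neither Corollary~\ref{c-noparallel} nor Lemma~\ref{l-crushsame} applies to it, and you have no control over whether the normalisation produces a non-empty sphere at all rather than collapsing near the cusps. The idea you are missing is the subdivision step, which manufactures an edge with exactly one ideal endpoint---the precise configuration the machinery of Section~\ref{s-min} is built to handle.
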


\begin{proof}
    We recall from Theorem~\ref{t-nointernal} that every vertex of
    $\tri$ is ideal.  Let $n$ denote the number of tetrahedra in $\tri$.

    \begin{figure}[tb]
        \centering
        \subfigure[Near a degree~1 edge]{\label{fig-deg1}%
            \quad\includegraphics[scale=1.0]{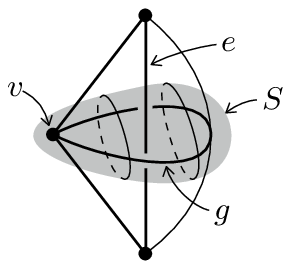}\quad}
        \qquad\qquad
        \subfigure[Near a degree~2 edge]{\label{fig-deg2}%
            \includegraphics[scale=1.0]{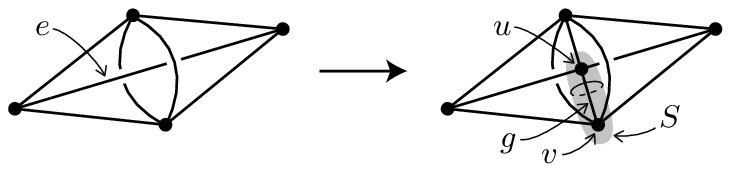}}
        \caption{Building boundary-parallel surfaces from low-degree edges}
    \end{figure}

    Suppose $\tri$ has an edge $e$ of degree~1.  Then there is some
    tetrahedron $\Delta$ with two faces folded together around $e$,
    as illustrated in Figure~\ref{fig-deg1}.  Let $g$ denote the
    edge of $\Delta$ that encircles $e$, and let $v$ denote the ideal vertex
    that meets $g$.
    Form an embedded surface $S$ that is boundary-parallel onto $v$
    and that encloses the edge $g$, as shown in the illustration.
    Since $g$ acts a barrier to normalisation, $S$ must normalise to a
    surface that is boundary-parallel onto $v$ but not the vertex link of $v$,
    contradicting Corollary~\ref{c-noparallel}.

    Suppose instead that $\tri$ has an edge $e$ of degree~2.
    Then there are two tetrahedra
    joined together along two faces on either side of $e$,
    as illustrated in the left portion of Figure~\ref{fig-deg2}.
    We subdivide $e$ with a new internal vertex $u$,
    and subdivide these two original tetrahedra into four
    as shown in the right portion of Figure~\ref{fig-deg2},
    so the resulting triangulation contains $n+2$ tetrahedra in total.
    Let $g$ denote one of the edges contained in all four new tetrahedra,
    as shown in the diagram.
    Let $v$ denote the ideal vertex that meets $g$, and let
    $S$ denote the boundary of a small regular neighbourhood of $g$.
    We observe that: (i)~$S$ is boundary-parallel onto the ideal vertex $v$;
    (ii)~$S$ is in fact a \emph{normal} surface; and
    (iii)~$S$ contains a quadrilateral in each of the four new tetrahedra.

    Let $S'$ be an \emph{outermost} normal surface that is
    boundary-parallel onto $v$ and that contains $S$ in the collar
    between $S'$ and $v$.
    As in the proof of Corollary~\ref{c-noparallel}, a standard
    Kneser-type finiteness argument \cite{kneser29-normal} shows that
    such a surface $S'$ exists (if $S$ is already outermost then
    $S'$ will just be normally isotopic to $S$).
    It follows from Lemma~\ref{l-crushsame} that crushing $S$ yields a
    new ideal triangulation $\tri'$ of the original manifold.
    Moreover, by observation~(iii) above, all four of the new tetrahedra
    will be destroyed by the crushing process, and so $\tri'$ will
    contain at most $n-2$ tetrahedra, contradicting the minimality of
    $\tri$.

    Finally,
    suppose $\tri$ has an edge of degree~3 contained in three distinct
    tetrahedra.  Here we can perform a 3-2 Pachner move to reduce the number of
    tetrahedra, again contradicting the minimality of $\tri$.
\end{proof}

%
%

\section{Generating candidate triangulations} \label{s-gen}

In this section we describe the algorithmic process of generating all
candidate ideal
3-manifold triangulations with $n$ tetrahedra, for each $n=1,\ldots,9$.
Here we use our combinatorial constraints on vertex links and low-degree
edges (Theorems~\ref{t-nointernal} and~\ref{t-lowdeg}) as
an integral part of the enumeration algorithm.

The algorithm extends earlier census algorithms in a way that is
mathematically straightforward (though more intricate to code),
and so we give only a brief outline of the process here.  The
full source code can be viewed in {\regina}'s online code repository
\cite{regina}, and will be included in the coming release of
\regina~4.96.

\begin{table}[tb]
\caption{Generation of candidate triangulations}
\label{tab-gen}
\begin{tabular}{c|r|r}
Tetrahedra & \multicolumn{2}{c}{Triangulations} \\
\cline{2-3}
& \multicolumn{1}{c|}{Results of}
& \multicolumn{1}{c}{Including low-} \\
& \multicolumn{1}{c|}{Theorem~\ref{t-enumerate}}
& \multicolumn{1}{c}{degree edges} \\
\hline
1 &           1 &                1 \\
2 &           7 &               18 \\
3 &          31 &              246 \\
4 &         224 &           3\,503 \\
5 &      1\,075 &          51\,652 \\
6 &      6\,348 &         810\,473 \\
7 &     35\,312 &     13\,090\,995 \\
8 &    218\,476 &    216\,484\,558 \\
9 & 1\,313\,052 & 3\,625\,523\,250 \\
\hline
Total & 1\,574\,526 & 3\,855\,964\,696
\end{tabular}
\end{table}

In summary, the results are:
\begin{theorem} \label{t-enumerate}
    Consider ideal 3-manifold triangulations in which
    every vertex link is a torus or Klein bottle,
    there are no edges of degree~1 or~2, and
    there are no edges of degree~3 that belong to three distinct
    tetrahedra.
    Up to relabelling, there are precisely
    $1\,574\,526$
    such triangulations with $n \leq 9$ tetrahedra.
    Moreover, every minimal ideal triangulation of a {\hypmfd}
    with \mbox{$n \leq 9$} tetrahedra belongs to this set.
\end{theorem}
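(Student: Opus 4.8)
The plan is to treat the two assertions of the theorem separately.

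The \emph{moreover} clause follows immediately from Section~\ref{s-min}, with no computation. Let $\tri$ be a minimal ideal triangulation of a {\hypmfd} $\mfdi$, with compact model $\mfdb$. By Theorem~\ref{t-nointernal}, $\tri$ has no internal vertices; since an ideal triangulation has no boundary triangles, every vertex link is a closed surface, so in fact every vertex of $\tri$ is ideal and its link is homeomorphic to the corresponding boundary component of $\mfdb$. As recalled in Section~\ref{s-prelim-hyp}, every such boundary component is a torus or Klein bottle, which gives the first constraint; Theorem~\ref{t-lowdeg} gives the remaining two. Hence $\tri$ belongs to the stated set.

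For the count I would run an exhaustive, isomorph-free backtracking enumeration. An ideal 3-manifold triangulation with the prescribed link condition has no boundary triangles, so up to relabelling it is determined by a perfect matching of the $4n$ tetrahedron faces together with, for each matched pair, one of the six affine identifications. I would generate these by processing faces in a fixed canonical order: at each step take the first not-yet-glued face and try gluing it to the lowest-numbered available face --- allowing a face of the same tetrahedron, and allowing several identifications between a single pair of tetrahedra --- under each of the six permutations, then recurse. To obtain isomorphism classes rather than labelled objects, a branch is pursued only while the partial gluing built so far is lexicographically least among all its relabellings that fix the already-processed simplices; equivalently one generates with multiplicity and canonicalises at the end. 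Disconnected triangulations are discarded, since 3-manifolds here are connected, and the absence of invalid edges together with the torus/Klein-bottle link condition makes each surviving object an ideal 3-manifold triangulation automatically.

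The search is feasible only because of aggressive pruning by exactly the three constraints of the theorem --- this is where Theorems~\ref{t-nointernal} and~\ref{t-lowdeg} enter the \emph{algorithm}, not merely the statement of its output. A branch is abandoned the instant that (i)~a gluing creates an invalid edge; (ii)~an edge becomes \emph{complete} (all the face identifications around it are fixed) with degree~1 or~2, or with degree~3 in three distinct tetrahedra; or (iii)~a vertex link, regarded as a partially built surface, is already incompatible with a torus or Klein bottle --- for instance a completed vertex whose link is a sphere, or a partial link whose Euler characteristic or number of boundary circles already precludes genus~1. The single correctness point, and it is routine, is that each such defect is permanent: a completed edge cannot change degree, an invalid edge stays invalid, and a vertex link cannot lose a sphere component or repair its Euler characteristic under later gluings. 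Tabulating the survivors by $n$ then reproduces Table~\ref{tab-gen}: the link condition alone admits $3\,855\,964\,696$ triangulations with $n \le 9$, and imposing the low-degree conditions leaves the claimed $1\,574\,526$.

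The principal obstacle is scale. Even with pruning, the $n = 9$ case alone produces several billion complete triangulations (and more partial ones along the way), so the enumeration must be carefully engineered: the edge-degree and link tests must be applied incrementally the moment the relevant simplex is completed, the isomorphism rejection must be cheap to evaluate, and the computation must be distributed across many processors. There is also the verification burden inherent in any census whose point is \emph{completeness}: one wants an independent check that no triangulation is missed or double-counted --- for example by re-running with a different face ordering, and by confirming that the $n \le 8$ totals agree with the existing Callahan-Hildebrand-Thistlethwaite-Weeks census.
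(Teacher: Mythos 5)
Your proposal is correct and matches the paper's proof in substance: the \emph{moreover} clause follows immediately from Theorems~\ref{t-nointernal} and~\ref{t-lowdeg} exactly as you argue, and the count is obtained by exhaustive isomorph-free backtracking with pruning on invalid edges, low-degree edges, and partial vertex links. The only organizational difference is that the paper first enumerates all connected 4-valent face-pairing multigraphs and then, for each fixed graph, searches over gluings --- which lets it precompute the graph's automorphism group to prune non-canonical branches cheaply, whereas you match faces directly; this is an implementation choice, not a different mathematical argument.
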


\begin{proof}
    We obtain this count of
    $1\,574\,526$ triangulations by explicitly generating them,
    as outlined below in Algorithm~\ref{a-enum}.
    The middle column of Table~\ref{tab-gen}
    breaks this figure down by number of tetrahedra,
    and Section~\ref{s-census} lists the computational running times.
    It is immediate from
    Theorems~\ref{t-nointernal} and~\ref{t-lowdeg} that
    every minimal triangulation of a {\hypmfd} belongs to this set.
\end{proof}

In brief, the enumeration algorithm operates as follows.

\begin{algorithm} \label{a-enum}
    To enumerate all generalised triangulations with $n$ tetrahedra:
    \begin{enumerate}
        \item \label{en-graph}
        Enumerate all connected 4-valent multigraphs on $n$ nodes
        (here loops and multiple edges are allowed).
        These will become the dual 1-skeleta, or face pairing graphs, of
        our triangulations---their nodes represent tetrahedra, and their
        arcs represent identifications between tetrahedron faces.

        \item \label{en-glue}
        For each such multigraph $\Gamma$, recursively try all
        possible ways of identifying the corresponding pairs of
        tetrahedron faces.  Note that, for each arc of $\Gamma$,
        there are six ways in which the corresponding pair of faces
        could be identified (corresponding to the six symmetries
        of the triangle).
    \end{enumerate}

    To ensure that each triangulation appears only once up to
    relabelling, we only keep those triangulations that are
    ``canonical''.  Essentially this means that,
    when the pairwise face identifications are
    expressed as a sequence of integers, this sequence is
    lexicographically minimal under all possible relabellings.

    To ensure that we only obtain ideal 3-manifold triangulations in which
    every vertex link is a torus or Klein bottle,
    that there are no edges of degree~1 or~2, and
    that there are no edges of degree~3 that belong to three distinct
    tetrahedra, we adapt step~(\ref{en-glue}) as follows.
    Consider each branch of the recursion, where we have a
    ``partially-constructed'' triangulation where only some of the
    face identifications have been selected.
    We prune this branch and backtrack immediately if we can show that,
    no matter how we complete our triangulation, we must obtain either:
    \begin{itemize}
        \item a non-canonical triangulation;
        \item an invalid edge;
        \item an edge of degree $\leq 2$, or an edge of degree 3 that
        meets three distinct tetrahedra;
        \item a vertex link with non-zero Euler characteristic.
    \end{itemize}
\end{algorithm}

These pruning tests are run extremely often (the recursion tree has
$6^{2n}$ branches for each multigraph $\Gamma$),
and so it is imperative that they be extremely fast---a na{\"i}ve
implementation could ultimately slow the enumeration down even whilst
reducing the size of the underlying search tree.
We address this as follows:
\begin{itemize}
    \item We construct the automorphism group of the multigraph $\Gamma$
    (i.e., the group of relabellings that leave $\Gamma$ unchanged),
    and use this to detect situations in which any completion of our
    partial triangulation must be non-canonical.
    See \cite{burton03-thesis} for details.
    Automorphisms have a long history of use in related combinatorial
    algorithms from graph theory \cite{mckay81-practical}.

    \item To detect invalid edges and low-degree edges,
    we track equivalence classes of
    tetrahedron edges (according to how they are identified within the
    partial triangulation), along with associated orientation
    information.  We maintain these equivalence classes using a
    modification of the \emph{union-find} data structure that
    allows not only for fast
    merging of equivalence classes (which union-find excels at) but also
    fast backtracking (which we need for our recursion).
    Details appear in \cite{burton07-nor10}.

    \item To detect vertex links with non-zero Euler characteristic,
    we likewise track equivalence classes of tetrahedron vertices,
    along with genus-related information.
    Specifically, each equivalence class represents a single vertex in the
    partial triangulation, and we require at all times that the
    link of such a vertex must be
    (i)~a sphere with one or more punctures; (ii)~a projective plane
    with one or more punctures; or (iii)~a torus or Klein bottle with
    zero or more punctures.

    The paper \cite{burton11-genus} describes fast data structures based
    on union-find and skip lists for maintaining equivalence classes of
    vertices and ensuring that every vertex link is a sphere with
    zero or more punctures (a condition tailored for the setting of
    closed 3-manifolds, not ideal triangulations).
    It is straightforward to adapt this to our setting:
    for each equivalence class we now maintain the pair $(\chi, p)$,
    where the corresponding vertex link is a closed surface of Euler
    characteristic $\chi$ with $p$ punctures.
    Conditions (i), (ii) and (iii) simply translate to $\chi \geq 0$,
    with $p > 0$ if $\chi > 0$.
    The same fast data structures described in \cite{burton11-genus}
    can be used to update the pairs $(\chi,p)$ as we merge
    equivalence classes and as we backtrack.
\end{itemize}

We note that the overall framework of enumerating graphs and then
gluings (i.e., the separation of steps~(1) and~(2) above)
is common to most 3-manifold census papers in the literature.
See \cite{hildebrand89-cuspedcensusold,matveev98-or6}
for some early examples.
With regard to pruning,
Callahan et~al.\ also maintain genus-related
data for partially-constructed vertex links, though they give no
further information on their underlying data structures
\cite{callahan99-cuspedcensus}.

The final column of Table~\ref{tab-gen} highlights the practical importance of
our combinatorial results from Section~\ref{s-min}.
If we remove the constraints on low-degree edges, there are over
$3.8$~billion ideal 3-manifold triangulations with $n \leq 9$ tetrahedra
in which every vertex link is a torus or Klein bottle---this
is several orders of magnitude more triangulations to process.
Moreover, by using the low-degree edge constraint to prune the search tree
(as described above),
we cut the enumeration time from over 1.2 CPU~years to roughly 1.4 CPU~months.
This highlights why such constraints should be embedded directly into the
enumeration algorithm where practical, instead of using them to
discard unwanted triangulations \emph{after} they have been built.

%
%

\section{Certifying non-hyperbolicity} \label{s-process}

We now describe the suite of algorithmic tests with which we
prove Theorem~\ref{t-falseneg} and Theorem~\ref{t-complete9}.
%
Our overall strategy is, for each of the $1\,574\,526$
distinct triangulations obtained in
Theorem~\ref{t-enumerate}, to run a suite of tests
that attempt to quickly certify one of the following:
(i)~that the triangulation is non-minimal and/or
the underlying manifold is non-hyperbolic; or
(ii)~that the underlying manifold is hyperbolic and is contained in the census.

The bulk of the computational work, and the main focus of this
section, is on case~(i).
It is important to note that \emph{we do not need to distinguish
between non-minimality and non-hyperbolicity}, since either allows us to
discard a triangulation---even if a non-minimal triangulation represents
a hyperbolic manifold, we will have processed this same manifold before.
This observation simplifies some of our tests significantly.

Our suite of tests does not guarantee to certify one of the two outcomes
above for every triangulation.  However:
\begin{itemize}
    \item These tests are \emph{rigorous}: any results they
    do produce are based on exact computations, and give
    conclusive certificates of (i) or (ii) accordingly.

    \item These tests are \emph{fast}:
    we are able to process all $1\,574\,526$ triangulations
    in under $16$~hours of CPU time, or $<0.04$\,seconds
    per triangulation on average.

    \item These tests are \emph{effective}: amongst all
    $1\,574\,526$ triangulations, there are just $396$ for which they do not
    conclusively prove (i) or (ii) above.
\end{itemize}

In the remainder of this section
we describe this suite of tests in detail, with proofs of
correctness where necessary.
Later, in Section~\ref{s-census}, we describe the way in which we
combine these tests, give detailed running times,
and wrap up the proofs of our main results
(Theorem~\ref{t-falseneg}, Corollary~\ref{c-complete8} and
Theorem~\ref{t-complete9}).
There we also explain how we handle the 396 remaining triangulations
that these tests do not resolve.

We emphasise that theoretical algorithms are already known for certifying
non-hyperbolicity, and indeed our tests draw upon these earlier results.
The value of the tests in this paper is that they are not only correct but also
fast and effective,
and therefore well-suited for bulk processing with millions of triangulations
as outlined above.


\subsection{Local moves}

Our first tests are the most elementary: we try to \emph{simplify} the
triangu\-lation---that is, retriangulate the underlying
manifold using fewer tetrahedra---in order to prove non-minimality.
There is a trade-off here between speed and power, and so we use two
different approaches:
a polynomial-time greedy test,
and a super-exponential-time exhaustive test.
Both are based on well-known techniques.

The paper \cite{burton13-regina} describes a greedy
algorithm that attempts to simplify a given 3-manifold triangulation.
It is based on local modifications to the triangulation,
including the standard Pachner moves (or bistellar flips)
plus a variety of more complex moves.
Most of these moves have been known for a long time, and have been
used by many authors in a variety of settings.

The algorithm is greedy in the sense that it will never
increase the number of tetrahedra at any step (i.e., it does not
attempt to climb out of ``wells'' around local minima).
As a result, it can prove triangulations to be non-minimal
(and it is extremely effective at this),
but it can never prove a triangulation to be minimal.

\begin{test}[Greedy non-minimality test] \label{test-greedy}
    Let $\tri$ be an ideal 3-manifold triangulation with $n$ tetrahedra.
    Run the greedy simplification algorithm of \cite{burton13-regina}
    over $\tri$.  If this results in a triangulation with fewer
    than $n$ tetrahedra then $\tri$ is non-minimal.
\end{test}

This test is one of the fastest in our suite, with a
small polynomial running time of $O(n^4 \log n)$.
See \cite[Theorem~2.6]{burton13-regina} for a detailed
time complexity analysis.

For triangulations where greedy methods fail, one can take a more
exhaustive approach.  The paper \cite{burton11-pachner} describes an
algorithm to enumerate \emph{all} triangulations that can be reached
from a given triangulation $\tri$ by performing \emph{any} sequence
of 2-3 and 3-2 Pachner moves without ever exceeding a given upper limit
on the number of tetrahedra.  The algorithm is based on a breadth-first
search, and uses \emph{isomorphism signatures} to avoid revisiting the
same triangulations; these are polynomial-time computable hashes
that uniquely identify a triangulation up to relabelling.

\begin{test}[Exhaustive non-minimality test] \label{test-exhaustive}
    Let $\tri$ be an ideal 3-manifold triangulation with $n$ tetrahedra,
    and let $h \in \N$.
    Compute all triangulations that can be obtained from $\tri$
    by performing 2-3 and 3-2 Pachner moves without ever exceeding
    $n+h$ tetrahedra.  If any such triangulation has fewer than $n$
    tetrahedra then $\tri$ is non-minimal.
\end{test}

This exhaustive test is much slower,
with a super-exponential running time of $\exp(O(n \log n))$
for fixed $h$.
This is still not
a severe problem (in our setting, it just means that the running time is
measured in seconds as opposed to microseconds).  However, it does mean
that we cannot run this exhaustive test over all $1\,574\,526$
triangulations; instead we must reserve it for difficult cases
where simpler tests have failed.

Like the greedy test,
this exhaustive test can identify non-minimal
triangulations but (for any reasonable $h$) cannot prove minimality.
However, it is extremely effective at simplifying triangulations
that the greedy algorithm cannot, even for $h$ very small.
By default we run this test with $h=2$, which in the related
setting of closed manifolds is a natural threshold\footnote{%
    This relates to the fact that every closed 3-manifold
    triangulation has an edge of degree $\leq 5$.},
and which is enough to simplify
\emph{all} $\sim 31$~million distinct triangulations of the 3-sphere
for $n \leq 9$ \cite{burton11-pachner}.


\subsection{Normal surfaces}

As seen in Section~\ref{s-gen},
some of our combinatorial results from Section~\ref{s-min}
can be embedded directly into the enumeration algorithm
(e.g., no low-degree edges and no internal vertices).
Others cannot (e.g., the constraints involving normal surfaces),
and so we use these here instead as tests for non-minimality.

The following tests follow directly from Theorems~\ref{t-nospheres}
and~\ref{t-notorikb}, plus the fact that a {\hypmfd} cannot contain
an embedded projective plane.

\begin{test}[Spheres and projective planes] \label{test-eff}
    Let $\tri$ be an ideal 3-manifold triangulation.
    Enumerate all standard vertex normal surfaces in $\tri$.
    If any of these surfaces has positive Euler characteristic,
    then $\tri$ is either non-minimal or does not represent a
    {\hypmfd}.
\end{test}

\begin{test}[Tori and Klein bottles] \label{test-torikb}
    Let $\tri$ be an ideal 3-manifold triangulation.
    Enumerate all standard vertex normal surfaces in $\tri$,
    and test whether any such surface is a torus or Klein bottle
    $S$ for which:
    \begin{itemize}
        \item $S$ is one-sided; or
        \item $S$ is two-sided, and if we cut $\tri$ open along $S$
        then no component of the resulting triangulation is a
        solid torus or solid Klein bottle.
    \end{itemize}
    If such a surface is found,
    then $\tri$ is either non-minimal or does not represent a {\hypmfd}.
\end{test}

Test~\ref{test-torikb} requires us to algorithmically recognise
the solid torus and solid Klein bottle.  For the solid torus we use a
standard crushing-based algorithm, which we describe in more detail
later (Algorithm~\ref{a-solidtorus}).  For the solid Klein bottle we run
solid torus recognition over the orientable double cover.

Regarding running times:
Test~\ref{test-eff} runs in time $O(7^n \times \mathrm{poly}(n))$---the
bottleneck here is simply enumerating all standard vertex normal surfaces.
See \cite{burton13-tree} for the full time complexity analysis.
Test~\ref{test-torikb} may require doubly-exponential time in theory,
since cutting along $S$ could introduce exponentially many tetrahedra.
In practice however, vertex normal surfaces typically have very few
normal discs \cite{burton13-bounds} and so the resulting triangulations
remain manageably small.

We emphasise again
how our tests become simpler because we do not need to
distinguish between non-minimality and non-hyperbolicity.
It is enough just to
find normal spheres, tori or Klein bottles as described above---there is
no need to test whether these surfaces are essential (which, for tori
and Klein bottles in particular, would be a more expensive process).

The tests above only examine \emph{vertex} normal surfaces,
not arbitrary normal surfaces.
This is to make the tests fast enough for our bulk-processing requirements.
For Test~\ref{test-eff}, this restriction
does not reduce the power of the test at all---a simple linearity argument
shows that $\tri$ contains a standard vertex normal surface of positive Euler
characteristic if and only if it contains \emph{any} normal surface of
positive Euler characteristic.
For Test~\ref{test-torikb} we may lose some opportunities to prove
non-minimality or non-hyperbolicity, but as we see later in
Section~\ref{s-census}, this does not hurt us significantly in practice.

We use \emph{standard} vertex normal surfaces instead of quadrilateral
vertex normal surfaces because this ensures that the vertex surfaces
are closed.  In ideal triangulations, quadrilateral vertex surfaces
typically include non-compact \emph{spun-normal surfaces}
\cite{tillmann08-finite}, which we wish to avoid here.


\subsection{Seifert fibred spaces}

Some Seifert fibred spaces contain no essential tori or Klein bottles
at all---examples include Seifert fibred spaces over the disc, annulus
or pair of pants with at most two, one and zero exceptional fibres
respectively.
Given an ideal triangulation of such a space that is both minimal and
sufficiently ``nice'' (i.e., with no ``unnecessary'' normal spheres,
tori or Klein bottles), all of the tests seen thus far will be inconclusive.
The following tests aim to resolve such cases.

Our focus now is purely on certifying non-hyperbolicity
(as opposed to non-minimality), and so we allow ourselves to alter the
triangulation where this is more convenient.  In particular,
we truncate ideal vertices to obtain real boundary triangles,
and for non-orientable manifolds we work in the
orientable double cover.

Our first test in this section is extremely opportunistic, but also
extremely fast and surprisingly effective.  This uses the
\emph{combinatorial recognition} routines built into {\regina}.
In essence, we examine the combinatorics of the triangulation to see if
it uses a standard combinatorial construction
for Seifert fibred spaces, and if it
does, we simply ``read off'' the Seifert fibred space parameters to identify
the underlying 3-manifold.

In brief, the standard construction
involves (i)~starting with a minimal triangulation of a punctured surface $F$;
(ii)~expanding each triangle of the surface into a 3-tetrahedron
triangular prism, giving a triangulation of $F \times I$;
(iii)~gluing the two ends of each prism together, which converts this
into $F \times S^1$; and then (iv)~attaching
layered solid tori\footnote{%
    These are simple one-vertex triangulations of the solid torus whose
    edges provide some desired set of curves on the torus boundary.
    See \cite{jaco03-0-efficiency} for details.}
to some of the remaining torus boundary components
to provide the exceptional fibres.

The reason combinatorial recognition is effective is because,
after we \emph{simplify} a triangulation of a Seifert fibred space
using the greedy algorithm from Test~\ref{test-greedy},
the result is often found to be a standard construction of this type.
See \cite{burton13-regina} for further discussion of
{\regina}'s combinatorial recognition facilities.

\begin{test}[Combinatorial recognition] \label{test-comb}
    Let $\tri$ be an ideal 3-manifold triangulation.
    Truncate the ideal vertices of $\tri$, and switch to the orientable
    double cover if $\tri$ is non-orientable.
    Simplify the resulting triangulation using the greedy algorithm of
    Test~\ref{test-greedy}.  If the result is a standard construction of
    a Seifert fibred space as outlined above, then $\tri$ does not
    represent a {\hypmfd}.
\end{test}

The test above is fast because every step
(truncation, double cover, greedy simplification and combinatorial recognition)
runs in small polynomial time \cite{burton13-regina}.

The next test is significantly more expensive, but it can succeed where
combinatorial recognition fails.  Here we explicitly search for a
normal annulus that:
\begin{itemize}
    \item for a Seifert fibred space $\mfdb$ over the disc with $\leq 2$
    exceptional fibres, splits $\mfdb$ into a pair of solid tori;
    \item for a Seifert fibred space $\mfdb$ over the annulus with $\leq 1$
    exceptional fibre, splits $\mfdb$ open into a single solid torus;
    \item for a Seifert fibred space $\mfdb$ over the pair of pants with no
    exceptional fibres, splits $\mfdb$ into a
    pair of products $\mathrm{Torus} \times I$.
\end{itemize}

As with our earlier normal surface tests, we restrict our search to
\emph{vertex} normal surfaces.  This time we work with quadrilateral
vertex surfaces,
since for manifolds with boundary triangles there are typically
far more standard vertex surfaces than quadrilateral vertex surfaces,
and the ``important'' surfaces typically appear in both sets.
As before, this restriction is primarily designed to keep the test fast---it
may cause us to lose opportunities to recognise Seifert fibred spaces,
but we see in Section~\ref{s-census} that this does not hurt us in practice.

\begin{test}[Annuli and {\mb}s] \label{test-annuli}
    Let $\tri$ be an ideal 3-manifold triangulation.
    Truncate the ideal vertices of $\tri$, and switch to the orientable
    double cover if $\tri$ is non-orientable.
    Simplify the resulting triangulation using the greedy algorithm of
    Test~\ref{test-greedy}, and denote the resulting triangulation by $\tri'$.

    Now enumerate all quadrilateral vertex normal surfaces in $\tri'$.
    If such a surface $S$ satisfies any of the following conditions,
    then the original triangulation $\tri$ does not represent a
    {\hypmfd}:
    \begin{enumerate}
        \item \label{en-test-mb}
        $S$ is a {\mb};
        \item \label{en-test-ann1}
        $S$ is an annulus, $\tri'$ has precisely one boundary component,
        and if we cut $\tri'$ open along $S$ then we obtain two solid tori;
        \item \label{en-test-ann2}
        $S$ is an annulus, $\tri'$ has precisely two boundary components,
        and if we cut $\tri'$ open along $S$ then we obtain one solid torus
        (and nothing else);
        \item \label{en-test-ann3}
        $S$ is an annulus, $\tri'$ has precisely three boundary components,
        and if we cut $\tri'$ open along $S$ then we obtain two
        triangulations of the product $\mathrm{Torus} \times I$.
    \end{enumerate}
\end{test}

\noindent
We give a proof of correctness shortly, but first
some implementation details:
\begin{itemize}
    \item Like Test~\ref{test-torikb} before,
    this test requires us to recognise the solid torus.
    This we do using the standard crushing-based algorithm
    outlined below (Algorithm~\ref{a-solidtorus}),
    and the result is guaranteed to be conclusive.
    \item This test also requires us to recognise $\mathrm{Torus} \times I$.
    This we do using combinatorial recognition, as outlined earlier in
    Test~\ref{test-comb}.
    The result might be inconclusive (i.e., we cannot tell whether a
    triangulation represents $\mathrm{Torus} \times I$ or not), but
    in such a case we simply abandon the current annulus $S$ and
    move on to the next vertex normal surface.
    Again we are making a trade-off: using combinatorial recognition to test for
    $\mathrm{Torus} \times I$ keeps the test fast but may come with an
    opportunity cost, yet again we see in Section~\ref{s-census} that
    this trade-off does not appear to hurt us in practice.
\end{itemize}

\begin{proof}[Proof for Test~\ref{test-annuli}]
    Let $\mfdi$ denote the underlying non-compact manifold for $\tri$,
    and suppose that $\mfdi$ is a {\hypmfd}.
    Let $\mfdb'$ denote the compact orientable manifold represented by $\tri'$,
    and suppose that the normal annulus or {\mb} $S \subset \mfdb'$
    satisfies one of the conditions
    (\ref{en-test-mb})--(\ref{en-test-ann3}) above.

    By Observation~\ref{obs-am}, the orientable manifold $\mfdb'$
    cannot contain a properly embedded {\mb} at all;
    that is, condition~(\ref{en-test-mb}) is impossible.
    Therefore $S$ must be an annulus.
    If $S$ is boundary-parallel then we must be in
    case~(\ref{en-test-ann1}) and $\mfdb'$ itself must also be a solid torus,
    contradicting hyperbolicity.

    Therefore $S$ is not boundary-parallel,
    and so (by Observation~\ref{obs-am} again)
    $S$ is an annulus whose boundary curves are trivial in $\bdry \mfdb'$.
    Taking cases (\ref{en-test-ann1})--(\ref{en-test-ann3}) separately:
    \begin{itemize}
        \item \emph{Case~\ref{en-test-ann1}:}
        Suppose $\mfdb'$ has one boundary component and
        $S$ separates $\mfdb'$ into a pair of solid tori.
        The only way to arrange this is for the two trivial curves of
        $\bdry S$ to sit one inside the other on $\bdry \mfdb'$,
        as shown in Figure~\ref{fig-ann1}.
        In this case $\mfdb'$ is obtained by attaching the two solid
        tori along an annulus that is trivial on one torus boundary but
        non-trivial on the other;
        this makes $\mfdb'$ a connected sum of the solid torus with one of
        $S^3$, $S^2 \times S^1$ or a lens space, any of which
        contradicts hyperbolicity.

        \item \emph{Case~\ref{en-test-ann2}:}
        Suppose $\mfdb'$ has two boundary components and
        $S$ cuts $\mfdb'$ open into a single solid torus $T$.
        This is impossible, since if $\bdry S$ consists of trivial curves on
        $\bdry \mfdb'$ then cutting $\mfdb'$ open along $S$ will always
        leave at least two boundary components.

        \begin{figure}[tb]
        \centering
        \subfigure[One boundary component]{\label{fig-ann1}%
            \qquad\includegraphics[scale=0.9]{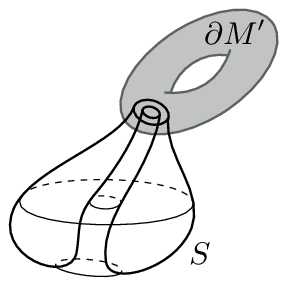}\qquad}
        \qquad\qquad
        \subfigure[Three boundary components]{\label{fig-ann3}%
            \quad\includegraphics[scale=0.9]{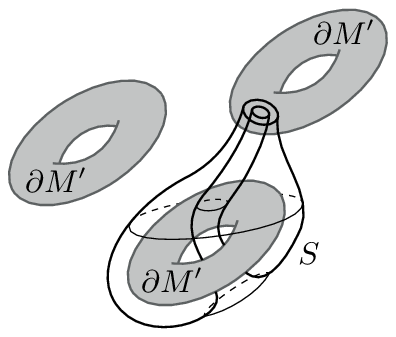}\quad}
        \caption{Resolving cases (\ref{en-test-ann1}) and
            (\ref{en-test-ann3}) of Test~\ref{test-annuli}}
        \end{figure}

        \item \emph{Case~\ref{en-test-ann3}:}
        Suppose $\mfdb'$ has three boundary components
        and $S$ splits $\mfdb'$ into a pair of $\mathrm{Torus} \times I$
        pieces.
        The only way to arrange this is for the two trivial curves of
        $\bdry S$ to sit one inside the other on some component of
        $\bdry \mfdb'$, as shown in Figure~\ref{fig-ann3}.
        Now $\mfdb'$ is now obtained by attaching the two
        $\mathrm{Torus} \times I$ pieces along an annulus that is
        trivial on one $\mathrm{Torus} \times I$ boundary
        but non-trivial on the other;
        this makes $\mfdb'$ a connected sum of the solid torus with
        $\mathrm{Torus} \times I$, contradicting hyperbolicity once more.
        \qedhere
    \end{itemize}
\end{proof}

Our final test for non-hyperbolicity is a special case, but one for
which we have an algorithm that is always conclusive and
(despite requiring theoretical exponential time) fast in practice:

\begin{test}[Solid torus recognition] \label{test-solidtorus}
    Let $\tri$ be an ideal 3-manifold triangulation.
    Truncate the ideal vertices of $\tri$, and switch to the orientable
    double cover if $\tri$ is non-orientable.
    If the resulting triangulation represents the solid torus, then
    $\tri$ does not represent a {\hypmfd}.
\end{test}

Solid torus recognition features repeatedly throughout this suite
of tests (see
Tests~\ref{test-torikb}, \ref{test-annuli} and \ref{test-solidtorus}),
and so we outline the algorithm below.  This algorithm does not appear in
the literature, but it is a straightforward assembly of well-known components:
the overall crushing framework is due to Jaco and Rubinstein
\cite{jaco03-0-efficiency}, and the branch-and-bound search for normal
discs and spheres is due to the author and Ozlen \cite{burton12-unknot}.

\begin{algorithm} \label{a-solidtorus}
    Let $\tri$ be a 3-manifold triangulation with boundary triangles
    (i.e., not an ideal triangulation).  To test whether or not
    $\tri$ represents the solid torus:
    \begin{itemize}
        \item Check that $\tri$ is orientable with $H_1=\Z$ and
        a single torus boundary component.  If not, then $\tri$ is not
        the solid torus.

        \item Repeatedly search for non-vertex-linking normal surfaces
        of positive Euler characteristic, and crush them as described in
        Section~\ref{s-prelim-crushing}.
        Then $\tri$ represents the solid torus if and only if,
        after crushing, the resulting triangulation is a
        (possibly empty) disjoint union of 3-spheres and/or 3-balls.
    \end{itemize}
\end{algorithm}

The bottleneck of this algorithm is the search for non-vertex-linking
normal surfaces of positive Euler characteristic (although 3-sphere
and 3-ball recognition are non-trivial, they too reduce to this
same bottleneck).
For this step we use the branch-and-bound framework described in
\cite{burton12-unknot}: essentially we work through a hierarchy of
linear programs that, in many experiments, is seen to prune the
exponential search tree down to linear size.

\begin{proof}[Proof for Algorithm~\ref{a-solidtorus}]
    The arguments are standard, and so we merely sketch the proof here.

    The algorithm terminates because each crushing operation reduces the
    total number of tetrahedra.
    Correctness follows from two
    observations: (i)~that any solid torus must contain a
    normal meridional disc \cite{haken61-knot}; and (ii)~when crushing a disc
    or sphere in an orientable and non-ideal triangulation,
    the only possible topological
    side-effects (beyond cutting along and collapsing the original
    disc or sphere) are cutting along additional discs or spheres,
    filling boundary spheres with balls,
    and/or deleting entire $B^3$, $S^3$, $\R P^3$ or $L_{3,1}$
    components \cite{jaco03-0-efficiency}.

    It follows that repeatedly crushing spheres and discs in the solid torus
    \emph{must} leave us with a disjoint union of 3-spheres and/or 3-balls,
    but if we start with some different manifold with torus boundary
    and $\Z$ homology then we must have some different topological component
    left over.
\end{proof}


\subsection{Hyperbolic manifolds} \label{s-test-hyp}

This concludes our suite of non-hyperbolicity and non-minimality tests;
all that remains is to certify that the remaining manifolds
are hyperbolic and (for $n \leq 8$ tetrahedra) to locate them in the
Callahan-Hildebrand-Thistlethwaite-Weeks census.
For this we call upon pre-existing software:
we ask {\snappea} to find hyperbolic structures and {\hikmot} to
rigorously verify them.

To account for the fact that we may encounter non-geometric
triangulations of hyperbolic manifolds, we call upon {\snappea}'s
built-in ``randomisation'' routine, which randomly performs a sequence
of local moves (e.g., Pachner moves) to obtain a different
triangulation of the same manifold (hopefully one that is geometric).

Our hyperbolicity test is simple: we identify all of
those triangulations that we \emph{suspect} to be hyperbolic (using
{\snappea}), group these into classes of triangulations that
represent the same manifold (using their Epstein-Penner decompositions),
and use {\hikmot} to rigorously certify that at least one triangulation
in each class is geometric.  The details are as follows.

\begin{test}[Hyperbolicity test with $r$ randomisations] \label{test-hyp}
    Let $\tri$ be an ideal 3-manifold triangulation, and let $r \in \N$.
    We ask {\snappea} to find a complete hyperbolic structure on $\tri$.
    If this fails, we make up to $r$ additional attempts in which we ask
    {\snappea} to randomise the triangulation and try again.

    If at any attempt {\snappea} does find a
    complete hyperbolic structure then we stop,
    ask {\snappea} to compute the corresponding Epstein-Penner
    decomposition $\mathcal{D}$,
    and ask {\hikmot} to attempt to rigorously verify that the
    corresponding triangulation is geometric.\footnote{%
        For non-orientable triangulations, we must run {\hikmot} over
        the orientable double cover.}
    We then record the original input triangulation $\tri$,
    the Epstein-Penner decomposition $\mathcal{D}$,
    and whether {\hikmot}'s verification was successful.
    If all attempts to find a complete hyperbolic structure fail,
    then we declare the test inconclusive for $\tri$.

    For each Epstein-Penner decomposition $\mathcal{D}$ that we record,
    let $\tri_1,\ldots,\tri_k$ be the
    (possibly many) input triangulations from which it was obtained.
    If {\hikmot}'s verification succeeded for \underline{any} of these
    $\tri_i$, then we declare that \underline{all} of these $\tri_i$
    represent a hyperbolic manifold.
    Otherwise the test is inconclusive for all of $\tri_1,\ldots,\tri_k$.
\end{test}

We recall that {\snappea}'s complete hyperbolic structures are
merely approximations computed using inexact floating-point arithmetic,
and that its Epstein-Penner decompositions may or may not be correct.
Nevertheless, we can prove that our declarations of hyperbolicity
are rigorous:

\begin{proof}
    Consider some class $\tri_1,\ldots,\tri_k$ for which {\snappea}
    computes a common Epstein-Penner decomposition $\mathcal{D}$,
    and for which {\hikmot} certifies some $\tri_i$ to be geometric.
    Although {\snappea} might not have computed the Epstein-Penner
    decompositions correctly, it nevertheless computes these decompositions
    using Pachner moves \cite{weeks93-convex}, and so we can still
    guarantee that $\tri_1,\ldots,\tri_k$ represent the same manifold.
    It follows that, since {\hikmot} has certified hyperbolicity for
    one of the $\tri_i$, then all of the $\tri_i$ represent
    this same hyperbolic manifold.
\end{proof}

Note that {\snappea} itself has a mechanism to automatically
randomise triangulations (up to $64$ times) when attempting to find
a complete hyperbolic structure.
We disable this feature here, and instead allow a much smaller number
of retriangulations (by default, $r=8$).  This is to keep the test
reasonably fast for triangulations that represent non-hyperbolic manifolds
(the vast majority of cases), for which {\snappea} will fail on all
$r$ retriangulations and the test will always be inconclusive.


%
%

\section{Building and processing the census} \label{s-census}

Here we briefly outline how we organise the computations, deal with
the handful of unresolved cases, and finalise the proof of our main results
(Theorem~\ref{t-falseneg}, Corollary~\ref{c-complete8} and
Theorem~\ref{t-complete9}).

\begin{table}[tb]
\footnotesize
\caption{Running the tests of Section~\ref{s-process} over
all enumerated triangulations} \label{tab-tests}
\begin{tabular}{l|rrrrrrrrr}
\# Tetrahedra & 1 & 2 & 3 & 4 & 5 & 6 & 7 & 8 & 9\\
\hline
Test~\ref{test-greedy}: Greedy non- & & & & & & & & \\
\quad minimality test &
$-$ & $-$ & $-$ & 25 & 268 & 2\,224 & 16\,622 & 130\,951 & 902\,439\\
Test~\ref{test-hyp}: Hyperbolicity & & & & & & & & \\
\quad (no randomisations) &
1 & 5 & 21 & 136 & 548 & 2\,647 & 11\,341 & 48\,437 & 204\,139\\
Test~\ref{test-eff}: Spheres and & & & & & & & & \\
\quad projective planes &
$-$ & $-$ & $-$ & $-$ & $-$ & $-$ & 17 & 203 & 1\,430\\
Test~\ref{test-hyp}: Hyperbolicity & & & & & & & & \\
\quad ($r=8$ randomisations) &
$-$ & $-$ & $-$ & $-$ & 4 & 52 & 360 & 1\,974 & 11\,608\\
Test~\ref{test-torikb}: Tori and Klein & & & & & & & & \\
\quad bottles &
$-$ & 2 & 10 & 63 & 253 & 1\,379 & 6\,666 & 35\,338 & 186\,104\\
Test~\ref{test-comb}: Combinatorial & & & & & & & & \\
\quad recognition &
$-$ & $-$ & $-$ & $-$ & $-$ & 6 & 10 & 75 & 417\\
Test~\ref{test-exhaustive}: Exhaustive non- & & & & & & & & \\
\quad minimality ($h=2$) &
$-$ & $-$ & $-$ & $-$ & 2 & 40 & 293 & 1\,427 & 6\,195\\
Test~\ref{test-annuli}: Annuli and & & & & & & & & \\
\quad {\mb}s &
$-$ & $-$ & $-$ & $-$ & $-$ & $-$ & $-$ & 32 & 366\\
Test~\ref{test-solidtorus}: Solid torus & & & & & & & & \\
\quad recognition &
$-$ & $-$ & $-$ & $-$ & $-$ & $-$ & $-$ & $-$ & $-$\\
\hline
Unresolved & $-$ & $-$ & $-$ & $-$ & $-$ & $-$ & 3 & 39 & 354\\
\hline
Total &
1 & 7 & 31 & 224 & 1\,075 & 6\,348 & 35\,312 & 218\,476 & 1\,313\,052
\end{tabular}
\end{table}

For each of the $1\,574\,526$ triangulations that we enumerate in
Theorem~\ref{t-enumerate}, we run the tests of Section~\ref{s-process} in
the order shown in Table~\ref{tab-tests}.  This table also shows how
many cases each test resolves.  Once a triangulation is certified as either
(i)~non-minimal and/or non-hyperbolic or (ii)~hyperbolic, then we do not
run any further tests on it (i.e., we expect the numbers towards the
bottom of Table~\ref{tab-tests} to be small, since very few cases remain
by that stage).

As a rough guide, the tests are ordered so that (i)~faster tests are
placed before slower tests; and (ii)~tests that are likely to resolve
many cases are placed earlier.  For example, the polynomial-time greedy
non-minimality test is placed well before the super-exponential-time
exhaustive non-minimality test, and tests that require building a
truncated orientable double cover (which could increase the number of
tetrahedra significantly) are placed towards the end.  Solid torus
recognition is placed last, since in practice one finds that most
triangulations of the solid torus are easy to simplify, and are
therefore caught by the polynomial-time greedy non-minimality
and/or combinatorial recognition tests.

As seen in Table~\ref{tab-tests}, the full suite of tests leaves just
$396$ of the $1\,574\,526$ triangulations unresolved.  For these $396$
cases:
\begin{itemize}
    \item 4~were cases that crashed {\snappea} during the initial run.
    Rerunning them (with a different random seed) shows them to be
    hyperbolic (Test~\ref{test-hyp}).
    \item The remaining cases are resolved by an exhaustive
    search through different triangulations of the same manifold.
    As in Test~\ref{test-exhaustive},
    we try all possible combinations of 2-3 and 3-2 Pachner moves
    without exceeding $h$ additional tetrahedra.
    For $h=3$, this certifies 175 as hyperbolic (by producing a
    new triangulation that we had already certified as such),
    and 166 as non-minimal (by producing a new triangulation with fewer
    tetrahedra).  Of the 51 leftover cases, rerunning with $h=4$ certifies
    1 as hyperbolic and the final 50 as non-minimal.
\end{itemize}

The overall result is that $281\,453$ of the original $1\,574\,526$
triangulations are certified to represent {\hypmfd}s,
and all others are certified as non-minimal and/or
non-hyperbolic.  At this stage we partition our triangulations into classes
that represent the same manifold:
\begin{itemize}
    \item
    Test~\ref{test-hyp} records the Epstein-Penner decomposition
    computed by {\snappea} for each triangulation.
    Although these Epstein-Penner decompositions might be incorrect,
    it is guaranteed that triangulations that produce the same cell
    decomposition represent the same manifold, and so we can group them
    together as such.
    This produces exactly $76\,000$ manifold classes.

    \item
    From each triangulation we run yet another exhaustive search through
    2-3 and 3-2 Pachner moves (without exceeding $h=2$ additional
    tetrahedra) in order to identify classes that should be
    merged---this is necessary because {\snappea} does indeed compute some
    Epstein-Penner decompositions incorrectly.
    This reduces our $76\,000$ manifold classes down to the final
    list of $75\,956$.

    \item
    We discard any triangulation that does not use the fewest tetrahedra
    in its class (since these are clearly non-minimal).
    This cuts our list of $281\,453$ hyperbolic triangulations
    down to the final list of $229\,112$ triangulations.
\end{itemize}

We emphasise that there is still no guarantee that our $75\,956$
manifold classes represent distinct manifolds, and therefore that our
$229\,112$ triangulations are indeed minimal.  This we resolve
later in Section~\ref{s-dup} using algebraic invariants.

This concludes the proof of Theorem~\ref{t-complete9}
(that our new 9-tetrahedron census has no intruders and no omissions).
To finish the proofs of Theorem~\ref{t-falseneg} and
Corollary~\ref{c-complete8} (that the older
Callahan-Hildebrand-Thistlethwaite-Weeks
census has no intruders and no omissions), we simply verify that
(i)~each manifold class with an $n\leq 8$-tetrahedron triangulation
includes a representative from the
Callahan-Hildebrand-Thistlethwaite-Weeks census; and that
(ii)~each manifold from this census is contained in at least one such class.

\begin{figure}[tb]
    \includegraphics[scale=0.6]{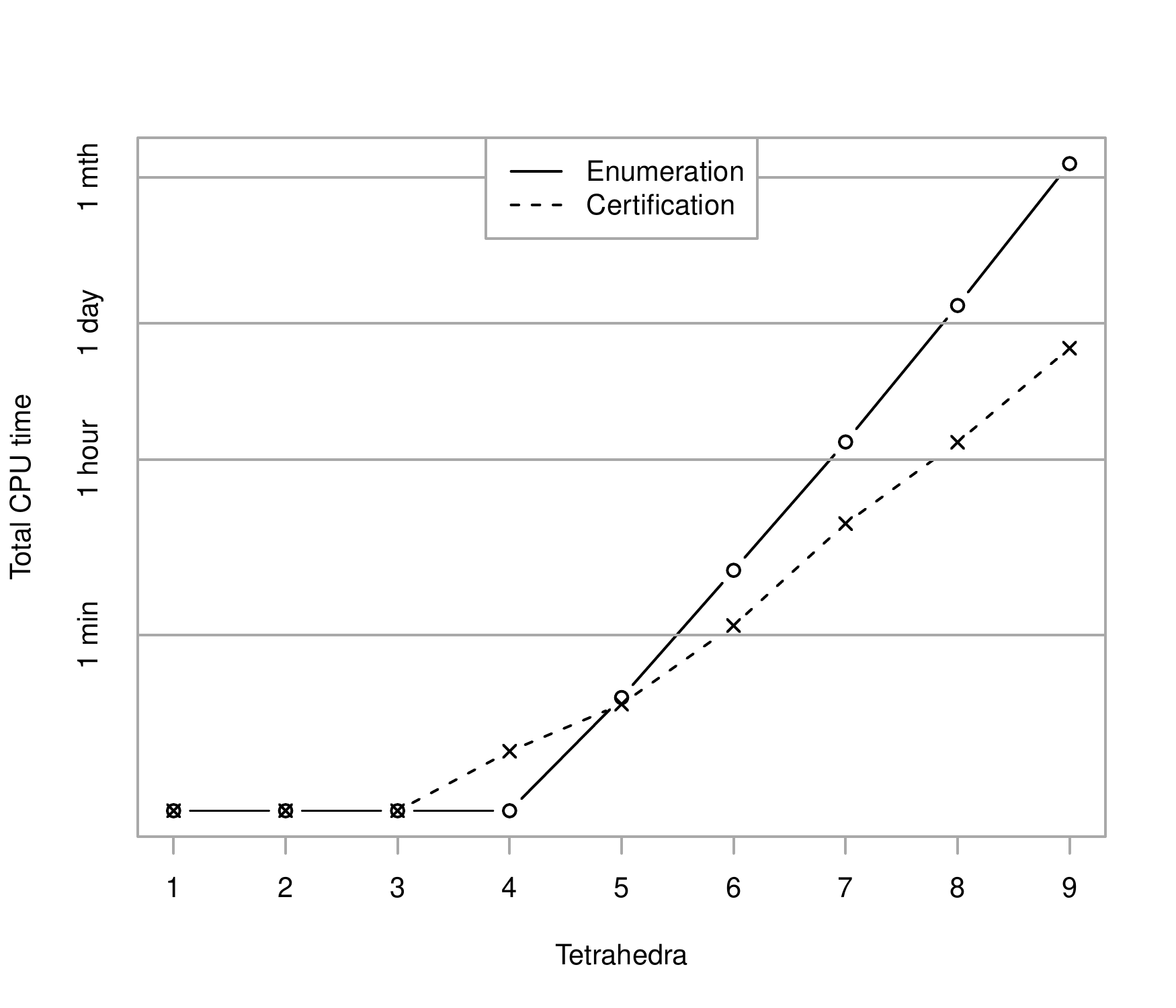}
    \caption{Running times}
    \label{fig-times}
\end{figure}

Figure~\ref{fig-times} plots the total running times for both the
enumeration of our $1\,574\,526$ triangulations as described in
Section~\ref{s-gen}, and the certification of triangulations as
non-minimal, non-hyperbolic and/or hyperbolic
using the tests of Section~\ref{s-process}.
As seen in the plot, the certification time---though significant---is a
mere fraction of the time required for enumeration.

This backs up our claim that the tests are fast---the
bottleneck for producing a rigorous and verified cusped hyperbolic
census is not the verification, but just the raw generation of candidate
triangulations.  Looking forward, it therefore seems
entirely feasible for future extensions of the cusped hyperbolic census to
be made rigorous in this way.

%
%

\section{Preventing duplicates} \label{s-dup}

To finish, we describe how we prove Theorem~\ref{t-dup}---that
no two manifolds in the census are homeomorphic.
For this we use algebraic invariants, which can be computed and
compared exactly.

\begin{notation}
    Let $G$ be any group.
    We let $\ab{G}$ denote the abelianisation of $G$.
    For any finite index $i \in \N$,
    we let $S_i(G)$ denote a set containing one arbitrary representative
    of each conjugacy class of index $i$ subgroups of $G$.
    Finally, we let $\abi{i}{G}$ denote the multiset of abelianisations
    $\{ \ab{s}\,|\,s \in S_i(G)\}$;
    here \emph{multiset} means that we record multiple occurrences of the
    same abelian group.
\end{notation}

Although $S_i(G)$ is not well-defined (since we have a choice of
representatives), the multiset of abelianisations $\abi{i}{G}$ is.
If $G$ is given as a finite presentation then $\abi{i}{G}$ is a
computable invariant of $G$, though this computation is
only feasible for small indices $i$ since the running time can grow
exponentially (or worse) with $i$ \cite{sims94-computation}.

\begin{lemma} \label{l-magma}
    Amongst all {\hypmfd}s that can be triangulated with $n \leq 9$
    ideal tetrahedra, any two distinct manifolds
    differ in at least one of the following invariants:
    \begin{itemize}
        \item orientability;
        \item the first homology $H_1 = \ab{\pi_1}$;
        \item for each index $2 \leq i \leq 11$,
        the multiset of abelianisations $\abi{i}{\pi_1}$.
    \end{itemize}
    Here $\pi_1$ denotes, as usual, the fundamental group of a 3-manifold.
\end{lemma}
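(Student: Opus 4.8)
The plan is to establish the lemma by direct exact computation. By Theorem~\ref{t-complete9} the $75\,956$ manifold classes of Table~\ref{tab-census} are exactly all of the {\hypmfd}s triangulable with $n \le 9$ ideal tetrahedra, so it suffices to compute the listed invariants for each of these classes and verify that the resulting composite invariant takes $75\,956$ distinct values; any two genuinely distinct such manifolds are then automatically separated by one of them. For each class I would fix one of its (combinatorially specified) triangulations and read off two things directly from the combinatorics: whether the manifold is orientable, and a finite presentation of $\pi_1$ --- for instance the standard presentation coming from the dual 2-skeleton of the truncated triangulation, with one generator per dual edge and one relator per face. Nothing here involves hyperbolic geometry or floating-point arithmetic, so the data is exact.

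Next I would hand each presentation to {\magma} (first simplifying it by Tietze transformations to make the later steps tractable). The abelianisation $\ab{\pi_1} = H_1$ is obtained as the cokernel of the abelianised relation matrix via Smith normal form. For each index $i$ with $2 \le i \le 11$ I would use {\magma}'s low-index-subgroup routines to enumerate representatives of all conjugacy classes of index-$i$ subgroups of $\pi_1$, abelianise each representative, and tabulate the resulting multiset $\abi{i}{\pi_1}$; as noted above, this multiset is independent of the choice of representatives, so it is a bona fide invariant of the manifold. Stringing together orientability, $H_1$, and $\abi{2}{\pi_1},\ldots,\abi{11}{\pi_1}$ produces one composite invariant per class, and I would then simply sort these $75\,956$ tuples and confirm that no two agree.

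The main obstacle is computational, not conceptual: the number of index-$i$ subgroups of a cusped hyperbolic 3-manifold group grows quickly, so the enumeration at $i = 11$ is overwhelmingly the expensive step, and it cannot be run blindly on all $75\,956$ groups. The practical strategy is to compute the cheap invariants first --- orientability, then $H_1$, then $\abi{2}{\pi_1}$, and so on --- re-partitioning the list after each one, and to invoke the costly higher-index enumerations only on those groups that still share all previously computed invariants with some other census group, raising $i$ one step at a time until every pair is separated or $i = 11$ is reached. One expects only a small residual set of ``stubborn'' pairs to require the larger indices. A secondary point that must be checked, but is automatic here, is that the presentations fed to {\magma} genuinely present the fundamental groups of the census manifolds: this holds because they are derived from the exact combinatorial triangulations, not from any approximate geometric reconstruction.
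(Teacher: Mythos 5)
Your proposal is correct and matches the paper's proof: both reduce the lemma to an exact computation, citing Theorem~\ref{t-complete9} to guarantee the census list covers all qualifying manifolds, then computing the listed algebraic invariants with {\magma} and observing that the $75\,956$ composite tuples are pairwise distinct. Your staged ``cheap invariants first, escalate only on ambiguous pairs'' strategy is precisely what the paper implements in the discussion following its proof.
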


\begin{proof}
    By Theorem~\ref{t-complete9}, our $75\,956$ census manifolds
    together contain all {\hypmfd}s that can be triangulated with $n \leq 9$
    ideal tetrahedra (though we do not yet know whether these manifolds are
    distinct).  We therefore prove Lemma~\ref{l-magma} simply by
    computing the invariants above for all $75\,956$ census manifolds
    and observing that they partition the manifolds into
    $75\,956$ distinct homeomorphism classes.
\end{proof}

Theorem~\ref{t-dup} now follows immediately from the computations in the
proof above.

\medskip

For the computations themselves we use {\magma} \cite{bosma97-magma},
which is fast and robust enough to compute hundreds of thousands of
such invariants within a reasonable running time.
(Other software packages such as {\snappy} and {\gap} can also compute
these invariants, but they are infeasibly slow for some problematic cases.)

\begin{figure}[tb]
    \includegraphics[scale=0.6]{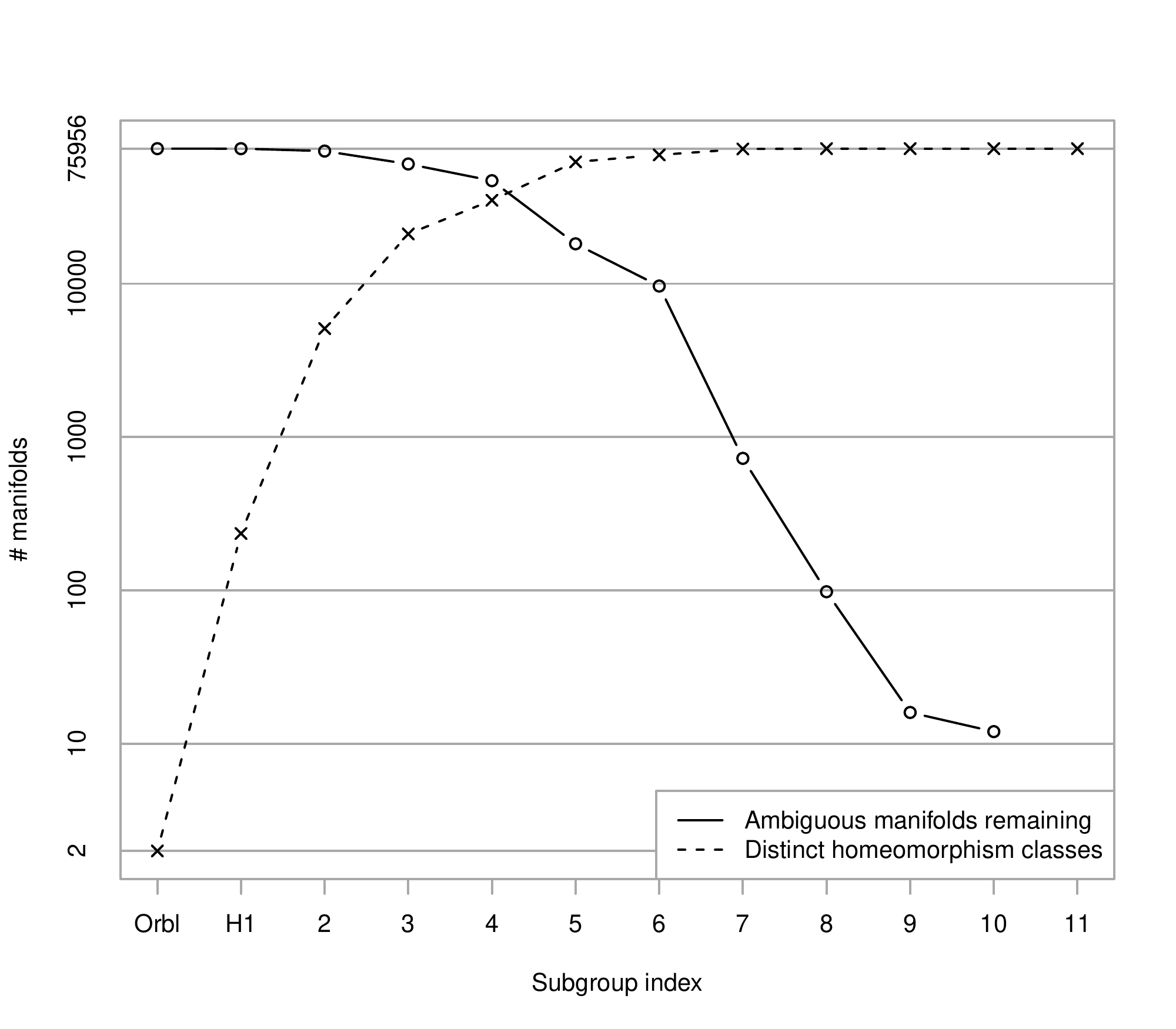}
    \caption{Proving that all $75\,956$ census manifolds are distinct}
    \label{fig-classes}
\end{figure}

Of course we do not need to compute all such invariants for all
$75\,956$ manifolds.
We compute $H_1$, $\abi{2}{\pi_1}$, \ldots, $\abi{11}{\pi_1}$ in 11
distinct stages (these are ordered by increasing difficulty),
and after each stage we put aside those census manifolds that have
been successfully distinguished from all others.
Those manifolds that remain (i.e., whose invariants so far coincide with
at least one other manifold) we call \emph{ambiguous manifolds}, and we
keep these for further processing.
Figure~\ref{fig-classes} plots how many ambiguous manifolds remain
after computing $\abi{i}{\pi_1}$ for each subgroup index $i$.
Note that the vertical axis uses a logarithmic scale.

\begin{figure}[tb]
    \includegraphics[scale=0.6]{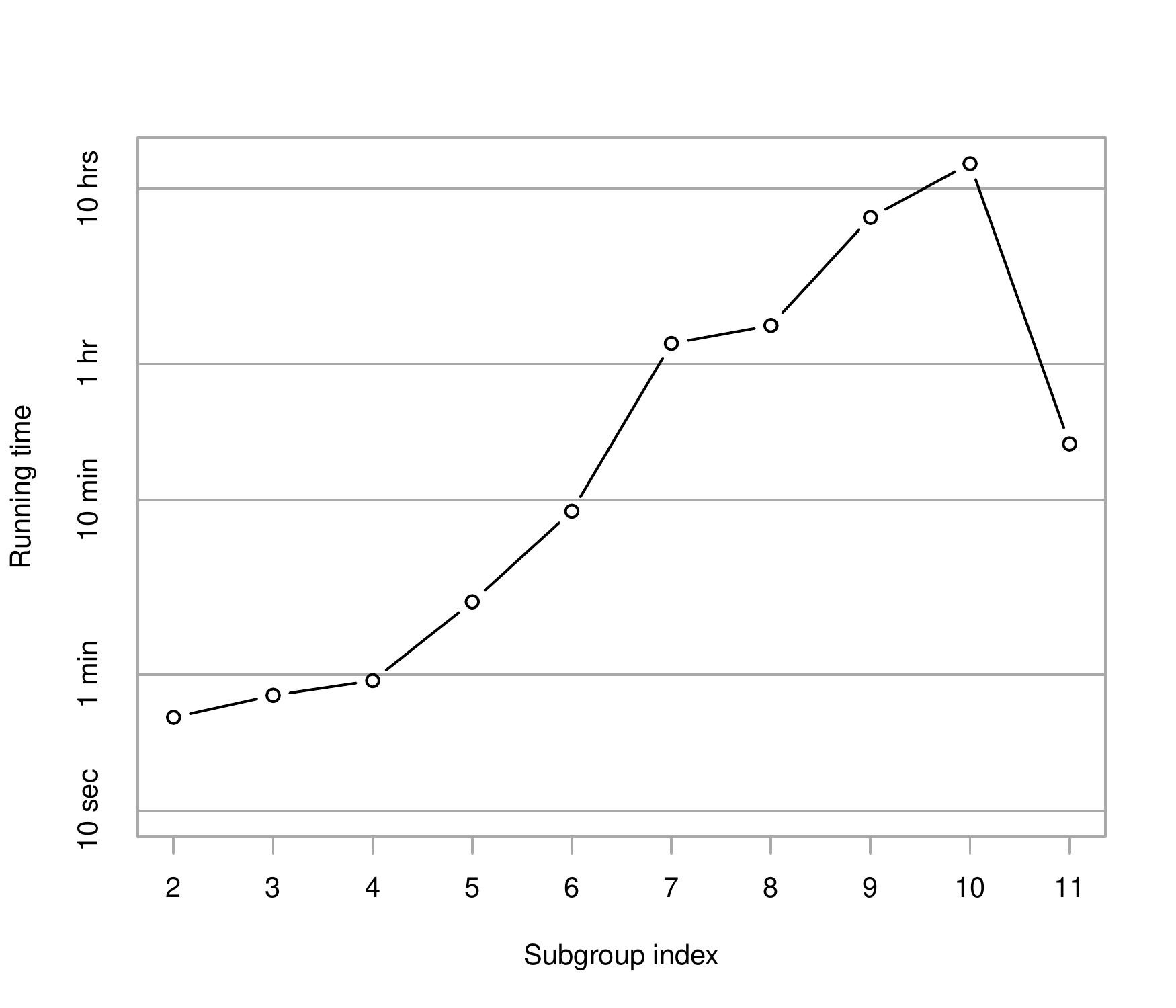}
    \caption{Running times for computing low-index subgroups}
    \label{fig-magma-times}
\end{figure}

Figure~\ref{fig-magma-times} plots the running time required to compute
$\abi{i}{\pi_1}$ for each $i$, again on a logarithmic scale
(measured on a single 3.47GHz Intel Xeon CPU).
This running time was only for ambiguous manifolds, as explained above;
in particular, the peak time of $14$~hours for index $i=10$ was for
just $16$ manifolds that remained at that late stage of processing.
We are perhaps fortunate that $i=11$ was sufficient to distinguish all
of the census manifolds, and that we were not required to increase the
index any further.

We note that these final $16$ manifolds are difficult to distinguish
even with the help of inexact floating-point invariants.
They come in eight ambiguous
pairs of non-orientable manifolds, where both manifolds in each pair have
the same hyperbolic volume and shortest geodesic, and indeed
homeomorphic orientable double covers.
Examining a larger section of the length spectrum does, however, reveal
enough information to distinguish them (subject to numerical error).
For reference, these final eight pairs appear in the census tables as:
\[ \small \begin{array}{llll}
   (\mathtt{y296},\  \mathtt{y297}) &
   (\mathtt{n8\_0534},\  \mathtt{n8\_0535}) &
   (\mathtt{n9\_1033},\  \mathtt{n9\_1034}) &
   (\mathtt{n9\_1091},\  \mathtt{n9\_1092}) \\
   (\mathtt{n9\_1183},\  \mathtt{n9\_1184}) &
   (\mathtt{n9\_1200},\  \mathtt{n9\_1201}) &
   (\mathtt{n9\_2015},\  \mathtt{n9\_2016}) &
   (\mathtt{n9\_2121},\  \mathtt{n9\_2122}).
   \end{array} \]

%
%

\bibliographystyle{amsplain}
\bibliography{pure}

\providecommand{\bysame}{\leavevmode\hbox to3em{\hrulefill}\thinspace}
\providecommand{\MR}{\relax\ifhmode\unskip\space\fi MR }
\providecommand{\MRhref}[2]{%
  \href{http://www.ams.org/mathscinet-getitem?mr=#1}{#2}
}
\providecommand{\href}[2]{#2}
\begin{thebibliography}{10}

\bibitem{adams91-ideal}
Colin Adams and William Sherman, \emph{Minimum ideal triangulations of
  hyperbolic {$3$}-manifolds}, Discrete Comput. Geom. \textbf{6} (1991), no.~2,
  135--153.

\bibitem{bonahon02-handbook}
Francis Bonahon, \emph{Geometric structures on 3-manifolds}, Handbook of
  Geometric Topology, North-Holland, Amsterdam, 2002, pp.~93--164.

\bibitem{bosma97-magma}
Wieb Bosma, John Cannon, and Catherine Playoust, \emph{The {M}agma algebra
  system. {I}. {T}he user language}, J. Symbolic Comput. \textbf{24} (1997),
  no.~3-4, 235--265, Computational algebra and number theory (London, 1993).

\bibitem{burton03-thesis}
Benjamin~A. Burton, \emph{Minimal triangulations and normal surfaces}, Ph.D.
  thesis, University of Melbourne, 2003, available from
  \texttt{http://\allowbreak regina.\allowbreak sourceforge.\allowbreak net/}.

\bibitem{burton04-regina}
\bysame, \emph{Introducing {R}egina, the 3-manifold topology software},
  Experiment. Math. \textbf{13} (2004), no.~3, 267--272.

\bibitem{burton07-nor10}
\bysame, \emph{Enumeration of non-orientable 3-manifolds using face-pairing
  graphs and union-find}, Discrete Comput. Geom. \textbf{38} (2007), no.~3,
  527--571.

\bibitem{burton09-convert}
\bysame, \emph{Converting between quadrilateral and standard solution sets in
  normal surface theory}, Algebr. Geom. Topol. \textbf{9} (2009), no.~4,
  2121--2174.

\bibitem{burton11-genus}
\bysame, \emph{Detecting genus in vertex links for the fast enumeration of
  3-manifold triangulations}, {ISSAC} 2011: Proceedings of the 36th
  International Symposium on Symbolic and Algebraic Computation, ACM, 2011,
  pp.~59--66.

\bibitem{burton11-pachner}
\bysame, \emph{The {P}achner graph and the simplification of 3-sphere
  triangulations}, {SCG} '11: Proceedings of the Twenty-Seventh Annual
  Symposium on Computational Geometry, ACM, 2011, pp.~153--162.

\bibitem{burton12-crushing-dcg}
\bysame, \emph{A new approach to crushing 3-manifold triangulations}, To appear
  in Discrete \& Computational Geometry, \texttt{arXiv:\allowbreak 1212.1441},
  2012.

\bibitem{burton13-regina}
\bysame, \emph{Computational topology with {R}egina: Algorithms, heuristics and
  implementations}, Geometry and Topology Down Under (Craig~D. Hodgson,
  William~H. Jaco, Martin~G. Scharlemann, and Stephan Tillmann, eds.),
  Contemporary Mathematics, no. 597, Amer. Math. Soc., Providence, RI, 2013.

\bibitem{burton13-duplicate}
\bysame, \emph{A duplicate pair in the {SnapPea} census}, To appear in
  Experimental Mathematics, \texttt{arXiv:\allowbreak 1311.7615}, November
  2013.

\bibitem{regina}
Benjamin~A. Burton, Ryan Budney, William Pettersson, et~al., \emph{Regina:
  Software for 3-manifold topology and normal surface theory},
  \texttt{http://\allowbreak regina.\allowbreak sourceforge.\allowbreak net/},
  1999--2014.

\bibitem{burton12-unknot}
Benjamin~A. Burton and Melih Ozlen, \emph{A fast branching algorithm for unknot
  recognition with experimental polynomial-time behaviour}, Preprint,
  \texttt{arXiv:\allowbreak 1211.1079}, November 2012.

\bibitem{burton13-tree}
\bysame, \emph{A tree traversal algorithm for decision problems in knot theory
  and 3-manifold topology}, Algorithmica \textbf{65} (2013), no.~4, 772--801.

\bibitem{burton13-bounds}
Benjamin~A. Burton, Jo{\~a}o Paix{\~a}o, and Jonathan Spreer,
  \emph{Computational topology and normal surfaces: Theoretical and
  experimental complexity bounds}, {ALENEX} 2013: Proceedings of the Meeting on
  Algorithm Engineering \& Experiments, SIAM, 2013, pp.~78--87.

\bibitem{callahan99-cuspedcensus}
Patrick~J. Callahan, Martin~V. Hildebrand, and Jeffrey~R. Weeks, \emph{A census
  of cusped hyperbolic 3-manifolds}, Math. Comp. \textbf{68} (1999), no.~225,
  321--332.

\bibitem{snappy}
Marc Culler, Nathan~M. Dunfield, and Jeffrey~R. Weeks, \emph{{SnapPy}, a
  computer program for studying the geometry and topology of 3-manifolds},
  \texttt{http://\allowbreak snappy.\allowbreak computop.\allowbreak org/},
  1991--2013.

\bibitem{epstein88-euclidean}
D.~B.~A. Epstein and R.~C. Penner, \emph{Euclidean decompositions of noncompact
  hyperbolic manifolds}, J. Differential Geom. \textbf{27} (1988), no.~1,
  67--80.

\bibitem{haken61-knot}
Wolfgang Haken, \emph{Theorie der {N}ormalfl{\"a}chen}, Acta Math. \textbf{105}
  (1961), 245--375.

\bibitem{hildebrand89-cuspedcensusold}
Martin~V. Hildebrand and Jeffrey~R. Weeks, \emph{A computer generated census of
  cusped hyperbolic 3-manifolds}, Computers and Mathematics (Cambridge, MA,
  1989), Springer, New York, 1989, pp.~53--59.

\bibitem{hoffman13-hikmot}
Neil Hoffman, Kazuhiro Ichihara, Masahide Kashiwagi, Hidetoshi Masai, Shin'ichi
  Oishi, and Akitoshi Takayasu, \emph{Verified computations for hyperbolic
  3-manifolds}, Preprint, \texttt{arXiv:\allowbreak 1310.3410}, October 2013.

\bibitem{jaco84-haken}
William Jaco and Ulrich Oertel, \emph{An algorithm to decide if a
  {$3$}-manifold is a {H}aken manifold}, Topology \textbf{23} (1984), no.~2,
  195--209.

\bibitem{jaco03-0-efficiency}
William Jaco and J.~Hyam Rubinstein, \emph{0-efficient triangulations of
  3-manifolds}, J. Differential Geom. \textbf{65} (2003), no.~1, 61--168.

\bibitem{kapovich09-hyperbolic}
Michael Kapovich, \emph{Hyperbolic manifolds and discrete groups}, Modern
  Birkh{\"a}user Classics, Birkh{\"a}user, Boston, MA, 2009, Reprint of the
  2001 edition.

\bibitem{kneser29-normal}
Hellmuth Kneser, \emph{Geschlossene {F}l{\"a}chen in dreidimensionalen
  {M}annigfaltigkeiten}, Jahresbericht der Deut. Math. Verein. \textbf{38}
  (1929), 248--260.

\bibitem{matveev98-or6}
Sergei~V. Matveev, \emph{Tables of 3-manifolds up to complexity 6},
  Max-Planck-Institut f{\"u}r Mathematik Preprint Series (1998), no.~67,
  available from \texttt{http://www.\allowbreak mpim-bonn.\allowbreak
  mpg.\allowbreak de/\allowbreak html/\allowbreak pre\-prints/\allowbreak
  preprints.html}.

\bibitem{mckay81-practical}
Brendan~D. McKay, \emph{Practical graph isomorphism}, Proceedings of the
  {T}enth {M}anitoba {C}onference on {N}umerical {M}athematics and {C}omputing,
  {V}ol. {I} ({W}innipeg, {M}an., 1980), vol.~30, 1981, pp.~45--87.

\bibitem{moser09-proving}
Harriet Moser, \emph{Proving a manifold to be hyperbolic once it has been
  approximated to be so}, Algebr. Geom. Topol. \textbf{9} (2009), no.~1,
  103--133.

\bibitem{pachner91-moves}
Udo Pachner, \emph{P.{L}. homeomorphic manifolds are equivalent by elementary
  shellings}, European J. Combin. \textbf{12} (1991), no.~2, 129--145.

\bibitem{sherman88-thesis}
William Sherman, \emph{Hyperbolic manifolds from ideal tetrahedra},
  Undergraduate thesis, Williams College, 1988.

\bibitem{sims94-computation}
Charles~C. Sims, \emph{Computation with finitely presented groups},
  Encyclopedia of Mathematics and its Applications, vol.~48, Cambridge
  University Press, Cambridge, 1994.

\bibitem{thistlethwaite10-cusped8}
Morwen Thistlethwaite, \emph{Cusped hyperbolic manifolds with 8 tetrahedra},
  \texttt{http://\allowbreak www.\allowbreak math.\allowbreak utk.\allowbreak
  edu/\allowbreak \~{}morwen/\allowbreak 8tet/}, October 2010.

\bibitem{tillmann08-finite}
Stephan Tillmann, \emph{Normal surfaces in topologically finite 3-manifolds},
  Enseign. Math. (2) \textbf{54} (2008), 329--380.

\bibitem{snappea}
Jeffrey~R. Weeks, \emph{Snap{P}ea: {H}yperbolic 3-manifold software},
  \texttt{http://\allowbreak www.\allowbreak geometrygames.\allowbreak
  org/\allowbreak SnapPea/}, 1991--2000.

\bibitem{weeks93-convex}
\bysame, \emph{Convex hulls and isometries of cusped hyperbolic
  {$3$}-manifolds}, Topology Appl. \textbf{52} (1993), no.~2, 127--149.

\end{thebibliography}

\end{document}